\newtheorem{thm}{Theorem}[section]
\newtheorem{lem}[thm]{Lemma}
\newtheorem{rem}[thm]{Remark}
\newtheorem{exm}[thm]{Example}
\newtheorem{assu}[thm]{Assumption}
\numberwithin{equation}{section}
  \newcommand{\sfrac}[2]{#1/(#2)}
\newcommand{\abs}[1]{\left\vert#1\right\vert}
\newcommand{\mean}[1]{\mathbb{E}\lbrack #1\rbrack}
\newenvironment{extrainfo}
  {\global\setbox\extrainfobox=\vbox\bgroup\parindent=0pt }
  {\egroup}
\newsavebox\extrainfobox
\begin{document}
\begin{frontmatter} 
\title{Order-preserving  strong schemes for  SDEs with locally Lipschitz
coefficients}
\author{Zhongqiang  Zhang $^{a,*}$ and Heping Ma $^{b}$}
\address{$^{a}$ {Department of Mathematical Sciences, Worcester Polytechnic Institute, Worcester, MA, 01609 (zzhang7@wpi.edu).  } Corresponding author. }
\address{$^b$ {Department of Mathematics, Shanghai University, Shanghai, 200444 (hpma@shu.edu.cn).}}

\begin{abstract}
We introduce a class of  explicit balanced schemes for stochastic differential equations with coefficients of superlinearly growth
satisfying a global monotone condition.  The first scheme is  a balanced Euler scheme and
is  of order half   in the mean-square sense whereas it is of order one  under additive noise. The second scheme is   a balanced Milstein scheme, which  is of order one in the mean-square sense.  Some numerical results are presented.
\end{abstract}

\begin{keyword}
{non-globally Lipschitz coefficients, tamed schemes, explicit schemes,  mean-square convergence, high-order  schemes}
\end{keyword} %
\begin{extrainfo}
\text{Highlights}
\begin{itemize}
\item Explicit schemes are proposed for SDEs with no globally Lipschitz coefficients. 
\item  Convergence rates are proved to be half-order and first-order. 
\item Numerical comparison with  reliable numerical schemes is made. 
\end{itemize}
\end{extrainfo}
 \date{\today}

\end{frontmatter} 
\maketitle  

\section{Introduction}

Let $(\Omega,\mathcal{F},P)$ be a complete probability space and
$\mathcal{F}_{t}^{w}$ be an increasing family of $\sigma$-subalgebras of
$\mathcal{F}$ induced by $w(t)$ for $0\leq t\leq T$, where $(w(t),\mathcal{F}
_{t}^{w})=((w_{1}(t),\ldots,w_{m}(t))^{\top},\mathcal{F}_{t}^{w})$ is an $m
$-dimensional standard Wiener process. We consider  numerical methods for the system of Ito
stochastic differential equations (SDE):
\begin{equation} \label{eq:sde-ito-vec} 
dX=a(t,X)dt+\sum_{r=1}^{m}\sigma_{r}(t,X)dw_{r}(t),\ \ t\in(t_{0} 
,T],\ X(t_{0})=X_{0},
\end{equation}
where $X,$\ $a,$\ $\sigma_{r}$ are $d$-dimensional column-vectors and $X_{0} $
is independent of $w$. We suppose that any solution $X_{t_{0},X_{0}}(t)$ of
\eqref{eq:sde-ito-vec} is  well-defined on $[t_{0},T]$.

 In this work,  we  propose   two  balanced  explicit schemes with equispaced time steps sizes for  \eqref{eq:sde-ito-vec}  when the coefficients $a(t,x)$ and $\sigma_r(t,x)$   satisfy no globally Lipschitz  conditions.   When the global Lipschitz conditions are violated,   many existing explicit numerical schemes for SDEs with Lipschitz coefficients are not stable and thus not convergent any more, see e.g. \cite{KloPla-B92,Mil-B95,MilTre-B04}.   
  The  forward Euler scheme  for  the equation $dX= -X^3\,dt +dw$ fails to converge  in the moments and mean-square sense,  
   see e.g. \cite{HutJK11,MilTre05}.  
  More examples have been discussed in  e.g. \cite{HigMS02,Hu96,MilTre05,Tal02}.   The failure of the   forward Euler scheme    has motivated many research on numerical  methods for \eqref{eq:sde-ito-vec} with non-globally Lipschitz conditions, see a   literature review on this
   topic  in \cite{HutJen15}.  For strong schemes for SDEs with non-globally
 Lipschitz coefficients of superlinear growth,
 several types of methods have been proposed:
 \begin{itemize}
 \item tamed explicit schemes, see e.g.    \cite{HutJen14,HutJK11,HutJK12,HutJW13, Sab13,Sab13a,Szp13,TreZhang13,WangGan13b,ZongWH14},
  where the coefficients are    approximated by the function of the form $\sfrac{f(x)}{1+h^{\alpha}\abs{f(x)}}$ ($0\leq \alpha\leq 1$) to control their superlinear growth.
 \item explicit truncation schemes,  where    the coefficients are set to be zero when solutions  become  very large, see e.g. \cite{LiuMao13,Mao15,MilTre05}.
  \item   implicit schemes, where the drift and/or diffusion coefficients are treated implicitly, see e.g.  \cite{HigMS02,Hu96,MaoSzp13,MaoSzp13a,TreZhang13}.
 \end{itemize}

However, for no globally Lipschitz coefficients,  i.e.,  neither drift nor diffusion coefficients are Lipschitz, no high-order schemes have been proposed, i.e.
 all the schemes proposed are of order half, see e.g. \cite{HutJen14,HutJW13,Sab13,TreZhang13,Szp13}.   To the best of our knowledge,  the  first-order scheme  was  only discussed in \cite{WangGan13b}  for SDEs with locally Lipschitz drift coefficients but Lipschitz diffusion coefficients.  
 Moreover, these schemes are not of strong order one under additive noise while we recall that     classical half-order schemes like the  Euler  scheme usually become first-order schemes  under additive noise when SDEs have Lipschitz continuous coefficients.  This motivates us to obtain high-order schemes. 
 
 As we are considering balanced explicit schemes, let us first   review briefly balanced explicit schemes for  SDEs with non-globally Lipschitz coefficients.
  When   only  the coefficient $a(t,x)$ violates  the Lipschitz condition and   satisfies one-sided Lipschitz condition
 (or monotone condition) and grows superlinearly,  some  explicit schemes (tamed schemes, one type of balanced schemes \cite{MilPS98}) have been proposed for  SDEs under such conditions, see e.g.
 tamed Euler schemes \cite{HutJK12,Sab13a}, tamed  Milstein scheme \cite{WangGan13b}. Compared with the classical Euler scheme and
 Milstein scheme, these schemes  have an approximate drift term  $a(t_k,X_k)/(1+h^{\alpha}\abs{a(t_k,X_k)})$ (or in a similar form) instead of  the drift terms $a(t_k,X_k)$ to control the growth of the drift, where
 $\alpha=1$ in \cite{HutJK12,WangGan13b} and $\alpha=1/2$ in \cite{Sab13}.

 When   the coefficients, both $a(t,x)$ and $\sigma_r(t,x)$,  violate  the Lipschitz conditions,  the aforementioned tamed schemes also  fails to converge in the mean-square sense.
 In this case, Ref. \cite{HutJen15} proposed a ``fully tamed'' Euler scheme
 \begin{equation}    
 X_{k+1}=X_k +\frac{a(t_k,X_k)h+\sum_{r=1}^m \sigma_r(t_k,X_k)\xi_{rk}\sqrt{h}}{\max(1, h\abs{a(t_k,X_k)h+\sum_{r=1}^m  \sigma_r(t_k, X_k)\xi_{rk}\sqrt{h} } }.
 \end{equation}
 This scheme is proved to converge without convergence order in \cite{HutJen15}. However, it is shown that the solutions from this scheme  become  oscillatory at certain values after
 the term $h\abs{ a(t_k,X_k)h+\sum_{r=1}^m \sigma_r(t_k, X_k)\xi_{rk}\sqrt{h} } $ is larger than one.    
 Under a global monotone condition and  some polynomials  growth conditions,  Ref. \cite{TreZhang13} proposed  the following balanced  scheme (tamed scheme)
 \begin{equation}\label{eq:tamed-TreZhang}
 X_{k+1}=X_k +\frac{a(t_k,X_k)h+\sum_{r=1}^m \sigma_r(t_k,X_k)\xi_{rk}\sqrt{h}}{1+ h\abs{a(t_k,X_k)h}+\sum_{r=1}^m\abs{ \sigma_r(t_k,X_k)\xi_{rk} }\sqrt{h}  },
 \end{equation}
 and proved a half-order convergence of this scheme. They showed  that the scheme is still of order half for additive noise.  Ref. \cite{Sab13} pointed out that
 the scheme \eqref{eq:tamed-TreZhang} is not applicable for some critical situations where the solution to \eqref{eq:sde-ito-vec} has only a finite number of moments.
 The author then  proposed the following   scheme
 \begin{equation}\label{eq:tamed-Sab}
 X_{k+1}=X_k +\frac{a(t_k,X_k)h+\sum_{r=1}^m \sigma_r(t_k, X_k)\xi_{rk}\sqrt{h}}{1+  \abs{a(t_k,X_k)}h^{\beta}+\sum_{r=1}^m\abs{ \sigma_r(t_k,X_k)} h^{\beta} },
 \end{equation}
 where the   scheme was proved to converge in the mean-square sense with order  half  when $\beta=1/2$.
 A general tamed scheme of this type (with drift and diffusion coefficients divided by some functional of coefficients plus one)  is  proposed in
 \cite{Szp13}  to accommodate  Lyapunov stability  of  SDEs  rather than  to simply focus on  $L^p$-stability which may not be available for some SDEs. 
  Under  general conditions, Refs. \cite{HutJen14,HutJW13} proposed   a tamed Euler scheme of a similar type for SDEs with exponential moments
  and proved stability and  half-order convergence in the $L^p$ sense.

 In tamed schemes (balanced explicit schemes), it is actually
 the use of the function $\sfrac{f(x)}{1+h^\alpha\abs{f(x)}}$ that  prevents the lifting of order of tamed schemes for SDEs (e.g.   \eqref{eq:tamed-TreZhang}  and \eqref{eq:tamed-Sab}). In the aforementioned tamed schemes, the approximation of  diffusion coefficients is of order $1$:
 \[\frac{\sigma(x)\sqrt{h}}{1+h^{1/2}\abs{\sigma(x)}} -\sigma(x)\sqrt{h}=  \frac{\sigma(x) \abs{\sigma(x)}h}{1+h^{1/2}\abs{\sigma(x)}} .\]
 According to Theorem \ref{thm:ms-convergence-local-global} (the fundamental theorem of strong convergence, see also \cite{TreZhang13}), we then can not have a strong scheme  of order   higher than half since $q_2$ in \eqref{eq:one-step-local-ms}  is not more than $1$. Here we  propose  the following scheme
 \begin{equation}\label{eq:sine-tamed}   
 X_{k+1}=X_{k}+\mathcal{T} (a(t_{k},X_{k})h)+\mathcal{T}(\sum_{r=1}^{m}\sigma_{r}(t_{k},X_{k})\xi_{rk}\sqrt{h}),
 \end{equation}
  where $\mathcal{T}(\cdot)$ is  either the  hyperbolic tangent function  or the sine function. 
 This scheme will be proved to be of half-order mean-square convergence  in general and is of first-order  mean-square convergence for additive noise.   Moreover, we can obtain a first-order strong scheme
  \begin{eqnarray} \label{eq:sine-tamed-milstein}   
     X_{k+1}  &=&  X_{k}+ \mathcal{T}(a(t_k,X_{k}) h )+\mathcal{T}(\sum_{r=1}^{m}\sigma_{r}(t_{k},X_{k})  \xi_{rk}\sqrt{h} )  + \mathcal{T}\left(\sum_{i,r=1}^m\Lambda_i \sigma_r(t,X_k) I_{i,r,t_k} \right),   
  \end{eqnarray}   where
  $ \displaystyle I_{i,r,t_k} = \int_{t_k} ^{t_{k+1}} \int_{t_k} ^{s} \,dw_i\,dw_r$.
Although  it is expensive to simulate the Levy area $I_{i,r,t_k}$ and thus the Milstein scheme,  we have significant reduction in the amount of operations for
commutative noises,
i.e.
\begin{equation}\label{eq:commutative-condition}
 \Lambda_i \sigma_r = \Lambda _r \sigma_i, \quad   \Lambda_r= \sigma_r^\top \frac{\partial}{\partial x}.
\end{equation}  In this case,  we  can use   increments of Brownian motions instead of  double Ito integrals $I_{i,r,t_k}$ in  \eqref{eq:sine-tamed-milstein}
 since $I_{i,r,t_k}+I_{r,i,t_k}=(\xi_{ik}\xi_{rk}-\delta_{ir})h/2$ where $\delta_{ir}$ is the Kronecker delta function. We then   simplify    \eqref{eq:sine-tamed-milstein}  as
  \begin{eqnarray} \label{eq:sine-tamed-milstein-com}
    X_{k+1}  & = &   X_{k}+ \mathcal{T}(a(t_k,X_{k}) h )+\mathcal{T}(\sum_{r=1}^{m}\sigma_{r}(t_{k},X_{k})  \xi_{rk}\sqrt{h} )  \notag \\
                    &  &       + \mathcal{T}\left(\frac{1}{2}\sum_{i,r=1}^m\Lambda_i \sigma_r(t,X_k)(\xi_{ik}\xi_{rk}-\delta_{ir})h \right).
 \end{eqnarray}   
In Section \ref{sec:proofs}, we will prove the convergence orders when the sine function is used and the proofs for the case of $\tanh$ are similar.  

We remark   that the use of the hyperbolic tangent or the sine function is motived   by obtaining higher-order schemes. 
In many applications, order-preserving may not be enough, e.g. in long-time simulation and when solutions are positive. One may want to preserve certain structure  of solutions in numerical schemes. We expect that it is possible to design a structure preserving scheme (half- or first- order) using different tame functions rather than  the hyperbolic tangent or the sine function.  For example,  a positivity-preserving scheme  using  the tame functions 
	$\mathcal{T}(x) =x /(1+h\abs{x})$ and the absolute function 
	  is proposed in \cite{BiZhang16} for a nonlinear SDE  with locally Lipschitz drift coefficient and  H\"older  continuous diffusion coefficient.

 In the next section, we present our requirements on coefficients in \eqref{eq:sde-ito-vec} and the fundamental  theorem  of strong convergence which are necessary for our proofs of convergence orders of the two schemes
 \eqref{eq:sine-tamed} and \eqref{eq:sine-tamed-milstein}.
We present the proof of half-order convergence of \eqref{eq:sine-tamed} in Section 3 and  that of first-order convergence of
\eqref{eq:sine-tamed-milstein} in Section 4. We present some numerical results in Section 5 at the end of the paper.

\section{Preliminary \label{sec:theo}}

  Throughout the paper,  we use the letter $K$ to denote
generic constants which are independent of $h$ (time step size) and $k$ (time steps).

Let $X_{t_{0},X_{0}}(t)=X(t),$\ $t_{0}\leq t\leq T,$ be a solution of the
system \eqref{eq:sde-ito-vec}. We will assume the bounded moments of initial condition,
global monotone condition and local Lipschitz condition as  follows:
\begin{assu}\label{asup:one-side-lip}
 (i) The initial condition is such that%
\begin{equation}
\mathbb{E}|X_{0}|^{2p}\leq K<\infty,\ \ \text{for all \ } p\geq1. \label{mom0}%
\end{equation}
(ii) For a sufficiently large $p_{0}\geq1$ there is a constant $c_{1}\geq0$
such that for $t\in\lbrack t_{0},T]$,
\begin{equation} \label{eq:monotone-global}%
(x-y,a(t,x)-a(t,y))+\frac{2p_{0}-1}{2}\sum_{r=1}^{m}|\sigma_{r}(t,x)-\sigma
_{r}(t,y)|^{2}\leq c_{1}|x-y|^{2},\ x,y\in\mathbb{R}^{d}.
\end{equation}
(iii) There exist $c_{2}\geq0$ and $\varkappa\geq1$ such that for $t\in\lbrack
t_{0},T]$,
\begin{equation}\label{eq:local-lip}%
|a(t,x)-a(t,y)|^{2}\leq c_{2}(1+|x|^{2\varkappa-2}+|y|^{2\varkappa
-2})|x-y|^{2},\,\ x,y\in\mathbb{R}^{d}.
\end{equation}

\end{assu}

Define   $X_{t,x}(t+h)$ of \eqref{eq:sde-ito-vec} as
\begin{equation}
X_{t,x}(t+h) =x + \int_t^{t+h} a(\theta,X_{t,x}(\theta)) \,dt + \int_t^{t+h} \sum_{r=1}^m\sigma_r(\theta,X_{t,x}(\theta)) \,dw_r,
\end{equation}
and introduce the one-step approximation $\bar{X}_{t,x}(t+h),$ $t_{0}\leq
t<t+h\leq T,$ to the solution $X_{t,x}(t+h)$
\begin{equation} \label{eq:one-step-abstract-v0}
\bar{X}_{t,x}(t+h)=x+A(t,x,h;w_{i}(\theta)-w_{i}(t),\;i=1,\ldots
,m,\;t\leq\theta\leq t+h).
\end{equation}
Using the one-step approximation \eqref{eq:one-step-abstract-v0}, we recurrently construct the
approximation $(X_{k},\mathcal{F}_{t_{k}}),\;k=0,\ldots,N,\;t_{k+1}
-t_{k}=h_{k+1},\;T_{N}=T$ with $X_0=X(t_0)$:
\begin{equation}\label{eq:one-step-abstract}
 X_{k+1}=\bar{X}_{t_{k},X_{k}}(t_{k+1}) =X_{k}+A(t_{k},X_{k},h_{k+1};w_{i}(\theta)-w_{i}(t_{k}),\;i=1,\ldots
,m,\;t_{k}\leq\theta\leq t_{k+1}).\notag
\end{equation}

For simplicity, we will consider a uniform time step size, 
i.e., $h_{k}=h $ for all $k.$

\begin{thm}[\cite{TreZhang13}] \label{thm:ms-convergence-local-global}
Suppose (i) Assumption \ref{asup:one-side-lip} holds;

(ii) The one-step approximation $\bar{X}_{t,x}(t+h)$ from  \eqref{eq:one-step-abstract-v0} has
the following orders of accuracy: for some $p\geq1$ there are $\alpha\geq1,$
$h_{0}>0,$ and $K>0$ such that for arbitrary $t_{0}\leq t\leq T-h,$%
\ $x\in\mathbb{R}^{d},$ and all $0<h\leq h_{0}:$
\begin{equation}\label{eq:one-step-local-mean}
\abs{\mean{X_{t,x}(t+h)-\bar{X}_{t,x}(t+h)}}\leq
K(1+|x|^{2\alpha})^{1/2}h^{q_{1}}\,,
\end{equation}
\begin{equation}\label{eq:one-step-local-ms}
\left[  \mathbb{E}|X_{t,x}(t+h)-\bar{X}_{t,x}(t+h)|^{2p}%
\right]  ^{1/(2p)}\leq K(1+|x|^{2\alpha p})^{1/(2p)}h^{q_{2}}\,
\end{equation}
with
\begin{equation}
q_{2}\geq\frac{1}{2}\,,\;q_{1}\geq q_{2}+\frac{1}{2}\,; \label{Ba07-lp}%
\end{equation}
(iii) The approximation $X_{k}$ from  \eqref{eq:one-step-abstract} has bounded moments, i.e.,
for some $p\geq1$ there are $\beta\geq1,$ $h_{0}>0,$ and $K>0$ such that for
all $0<h\leq h_{0}$ and all $k=0,\ldots,N$:
\begin{equation}
\mathbb{E}|X_{k}|^{2p}<K(1+\mathbb{E}|X_{0}|^{2p\beta}). \label{appmomt}%
\end{equation}
Then for any $N$ and $k=0,1,\ldots,N$ the following inequality holds:
\begin{equation}\label{eq:global-err-ms}
\left[  \mathbb{E}|X_{t_{0},X_{0}}(t_{k})-\bar{X}_{t_{0},X_{0}}(t_{k}%
)|^{2p}\right]  ^{1/(2p)}\leq K(1+\mathbb{E}|X_{0}|^{2\gamma p})^{1/(2p)}%
h^{q_{2}-1/2},
\end{equation}
where $K>0$ and $\gamma\geq1$ do not depend on $h$ and $k,$ i.e., the order of
accuracy of the method  \eqref{eq:one-step-abstract}  is $q=q_{2}-1/2.$
\end{thm}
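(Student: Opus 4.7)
The approach is the standard Milstein fundamental-theorem framework, adapted to the non-globally Lipschitz setting by using the monotone condition in place of a global Lipschitz bound. Letting $X_{t_j,X_j}(t)$ denote the exact solution of \eqref{eq:sde-ito-vec} started at $(t_j, X_j)$, one has the telescoping identity
\begin{equation*}
X_{t_0,X_0}(t_k) - X_k \;=\; \sum_{j=0}^{k-1}\bigl[X_{t_j,X_j}(t_k) - X_{t_{j+1},X_{j+1}}(t_k)\bigr],
\end{equation*}
so each summand is the propagation through the exact SDE on $[t_{j+1}, t_k]$ of the one-step error $\rho_{j+1} := X_{t_j,X_j}(t_{j+1}) - X_{j+1}$.

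First I would establish $L^{2p}$-stability of the exact flow with respect to its initial data. Applying Ito's formula to $|X_{t,x}(s)-X_{t,y}(s)|^{2p}$ together with the global monotone condition \eqref{eq:monotone-global} (this is precisely where the ``sufficiently large $p_0$'' in Assumption \ref{asup:one-side-lip}(ii) is used), one obtains $[\mathbb{E}|X_{t,x}(s)-X_{t,y}(s)|^{2p}]^{1/(2p)} \leq e^{K(s-t)}|x-y|$. A conditioning argument on $\mathcal{F}_{t_{j+1}}$ then transfers this estimate to random initial data, so that propagating $\rho_{j+1}$ through the exact flow from $t_{j+1}$ to $t_k$ amplifies it only by a bounded factor in the $L^{2p}$ sense. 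The polynomial-growth factors $(1+|X_j|^{2\alpha})^{1/2}$ coming from the local accuracy hypotheses \eqref{eq:one-step-local-mean}--\eqref{eq:one-step-local-ms}, together with the factor $(1+|X_j|^{2\varkappa-2})$ produced by \eqref{eq:local-lip}, are then controlled uniformly in $j$ and $h$ by the moment bound \eqref{appmomt} on $X_j$ (and the analogous bound on $X(t_j)$ following from \eqref{mom0} and the monotone condition) combined with H\"older's inequality.

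The crucial use of the gap $q_1 \geq q_2 + 1/2$ appears when bounding the accumulated one-step errors. I would split each $\rho_{j+1}$ into its $\mathcal{F}_{t_j}$-conditional mean $\mathbb{E}[\rho_{j+1}\mid \mathcal{F}_{t_j}]$ and its centered part. The conditional means accumulate additively over $N=O(1/h)$ steps, contributing a term of order $h^{q_1-1}\leq h^{q_2-1/2}$; the centered pieces form a martingale-difference sequence whose $L^{2p}$ norm, estimated by a Burkholder-Davis-Gundy inequality, is bounded by the square root of $\sum_j h^{2q_2}$, again of order $h^{q_2-1/2}$. Combining with the stability estimate yields a discrete inequality $\varepsilon_k \leq K h^{q_2-1/2} + K h \sum_{j<k}\varepsilon_j$ for $\varepsilon_k := [\mathbb{E}|X(t_k)-X_k|^{2p}]^{1/(2p)}$, which closes by the discrete Gronwall lemma. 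The main obstacle is the bookkeeping of polynomial-growth moments when transferring errors across conditional expectations: auxiliary $L^q$ exponents must be chosen large enough that every arising moment of $X_j$ and $X(t_j)$ falls within the range controlled by \eqref{mom0}--\eqref{appmomt}, which is precisely why Assumption \ref{asup:one-side-lip}(ii) is stated for a sufficiently large $p_0$.
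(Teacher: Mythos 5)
First, a point of reference: the paper does not prove Theorem \ref{thm:ms-convergence-local-global} itself; it is quoted from \cite{TreZhang13}, and the proof there is a \emph{one-step recursion}, not a global telescoping sum. Writing $Z_{k}=X_{t_{0},X_{0}}(t_{k})-X_{k}$, one decomposes $Z_{k+1}=S_{k+1}+\rho_{k+1}$ with $S_{k+1}=X_{t_{k},X(t_{k})}(t_{k+1})-X_{t_{k},X_{k}}(t_{k+1})$ (the accumulated error propagated through a \emph{single} step of the exact flow) and $\rho_{k+1}$ the new one-step error; one then expands $\mathbb{E}|S_{k+1}+\rho_{k+1}|^{2p}$ and handles the cross terms by conditioning on $\mathcal{F}_{t_{k}}$. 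The term $(Z_{k},\mathbb{E}[\rho_{k+1}\mid\mathcal{F}_{t_{k}}])$ is where $q_{1}\geq q_{2}+1/2$ enters, the remaining cross term is controlled by a one-step flow-deviation lemma of the form $\mathbb{E}[|S_{k+1}-Z_{k}|^{2}\mid\mathcal{F}_{t_{k}}]\leq K|Z_{k}|^{2}h$ derived from \eqref{eq:monotone-global}, and a discrete Gronwall argument closes the recursion. Your individual ingredients (flow stability via Ito's formula and the monotone condition, the role of the gap $q_{1}\geq q_{2}+1/2$, the moment bookkeeping that forces $p_{0}$ large) are the right ones.

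Your telescoping decomposition, however, has a genuine gap at its central step. The summands $D_{j}:=X_{t_{j},X_{j}}(t_{k})-X_{t_{j+1},X_{j+1}}(t_{k})$ are the one-step errors $\rho_{j+1}$ pushed through the \emph{nonlinear} flow from $t_{j+1}$ all the way to $t_{k}$; they are $\mathcal{F}_{t_{k}}$-measurable, not $\mathcal{F}_{t_{j+1}}$-measurable. Splitting $\rho_{j+1}$ into its $\mathcal{F}_{t_{j}}$-conditional mean and a centered part therefore does not induce a corresponding splitting of $D_{j}$, and the ``centered pieces'' of the $D_{j}$ are not a martingale-difference sequence in $j$ with respect to any filtration, so the Burkholder--Davis--Gundy step does not apply and the naive $L^{2p}$ triangle inequality only yields $O(h^{q_{2}-1})$. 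To recover the extra half order along your route you would need $|\mathbb{E}[D_{j}\mid\mathcal{F}_{t_{j}}]|=O(h^{q_{2}+1/2})$, i.e.\ you would have to transport the smallness of $\mathbb{E}[\rho_{j+1}\mid\mathcal{F}_{t_{j}}]$ through the flow map; that requires a first-order Taylor expansion of $x\mapsto X_{t,x}(s)$ with an $O(|\rho|^{2})$ remainder, hence mean-square differentiability of the flow in its initial data, which is not available under Assumption \ref{asup:one-side-lip} (the coefficients are only locally Lipschitz and monotone). This is precisely the difficulty the one-step recursion is built to avoid: there the exact flow acts only over an interval of length $h$, and only the deviation $S_{k+1}-Z_{k}$, never a derivative of the flow, has to be estimated. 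A smaller symptom of the same conflation: in the telescoping approach the factor $e^{K(t_{k}-t_{j+1})}$ from flow stability already plays the role of the Gronwall constant, so the final discrete Gronwall inequality you write down has no recursive term left to act on.
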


According to this theorem, we can obtain the convergence order of a one-step method  by providing
boundedness of  moments  and local truncation error of the one-step method.
With this theorem, we will prove convergence orders of our balanced Euler and Milstein scheme in next two sections.
  %
The proof for our balanced Euler scheme will be given in details while the proof for our balanced Milstein scheme will  be briefed  with  necessary details since the idea of the proof
is very similar.

In the proofs, we will frequently use the following facts
 \begin{equation}\label{eq:polynomial-growth-deduced}
 \abs{a(t,x)}^2\leq K(1+\abs{x}^{2\kappa}),\quad \sum_{r=1}^m \abs{\sigma_r(t,x)}^2\leq K(1+\abs{x}^{\kappa+1}).
 \end{equation}
which can be readily seen from  \eqref{eq:local-lip} and  \eqref{eq:monotone-global}.
From  the global monotone condition  \eqref{eq:monotone-global}, we can readily obtain
\begin{equation}\label{eq:sde-moment-bound-global-monotone}%
\mean {\abs{X_{t_0,X_0}(t)}^{2p} }<K(1+\mean{\abs{X_0}^{2p}}),\ 1\leq p< p_{0},\ \ t\in (t_{0},T].
\end{equation}


 \section{The balanced Euler scheme} \label{sec:proofs}
 In this section, we prove a half-order mean-square convergence of our balanced Euler scheme
\eqref{eq:sine-tamed}.   For additive noise, we prove that  \eqref{eq:sine-tamed} is a first-order scheme.
By Theorem~\ref{thm:ms-convergence-local-global},  we need to prove
 boundedness of moments and local truncation error.   We consider only the case when $\mathcal{T}(\cdot)$ is the sine function as  the proof for $\mathcal{T}(\cdot)=\tanh(\cdot)$ is similar.

\subsection{Boundedness of moments of the solutions to \eqref{eq:sine-tamed}}
We will follow the recipe of the proof of moments boundedness in \cite[Section 3]{TreZhang13}, which uses a stopping time technique, see also, e.g.
\cite{HutJen15,MilTre05}.

\begin{lem}\label{lem:mom-sine-tamed}
Suppose Assumption \ref{asup:one-side-lip} holds with
sufficiently large $p_{0}$. For all natural $N$ and all $k=0,\ldots,N$ the
following inequality holds for moments of the scheme \eqref{eq:sine-tamed}
\begin{equation}\label{eq:mom-sine-tamed}
\mathbb{E}|X_{k}|^{2p}\leq K(1+\mathbb{E}|X_{0}|^{2p\beta}),\ \ 2\leq
2p<\frac{2p_{0}}{3\varkappa-3}-1,
\end{equation}
where the constants $\beta\geq1$ and $K>0$ are  independent of $h$ and $k$.
\end{lem}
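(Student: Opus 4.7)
The plan is to verify hypothesis (iii) of Theorem~\ref{thm:ms-convergence-local-global} for the scheme \eqref{eq:sine-tamed} by a direct one-step moment expansion, in the spirit of \cite[Lemma~3.1]{TreZhang13}. The key observation is that the Taylor expansion $\sin(y)=y-y^3/6+O(y^5)$ makes the scheme mimic a classical Euler step when the arguments of sine are small, so the global monotone condition \eqref{eq:monotone-global} governs the leading moment growth, while the uniform bound $\abs{\sin(y)}\le 1$ acts as a safety net on the exceptional event where the arguments are large.

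Writing $\Delta_k=\sin(a(t_k,X_k)h)+\sin(\sum_r\sigma_r(t_k,X_k)\xi_{rk}\sqrt h)$ componentwise, I would expand $\abs{X_{k+1}}^{2p}=(\abs{X_k}^2+2(X_k,\Delta_k)+\abs{\Delta_k}^2)^p$ and take conditional expectation. Oddness of sine and Gaussian symmetry of $\xi_{rk}$ give $\mean{\sin(\sum_r\sigma_r(t_k,X_k)\xi_{rk}\sqrt h)\mid\mathcal{F}_{t_k}}=0$, so only the drift term and the $L^2$-contribution of the diffusion, $\mean{\abs{\sin(\sum_r\sigma_r\xi_{rk}\sqrt h)}^2\mid\mathcal{F}_{t_k}}$, enter at leading order. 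On the event $A_k$ where all sine arguments are bounded by $1$, substituting $\sin(Y)=Y-Y^3/6+\cdots$ and $\sin^2(Z)=Z^2-Z^4/3+\cdots$ and invoking \eqref{eq:monotone-global} with $y=0$ to dominate $(X_k,a(t_k,X_k))+\tfrac{2p-1}{2}\sum_r\abs{\sigma_r(t_k,X_k)}^2$ by $K(1+\abs{X_k}^2)$ should yield a recursion of the form $\mean{\abs{X_{k+1}}^{2p}\mathbf{1}_{A_k}\mid\mathcal{F}_{t_k}}\le(1+Kh)\abs{X_k}^{2p}+Kh+\rho_k$, with $\rho_k$ collecting the Taylor residues.

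The residue $\rho_k$ is controlled via \eqref{eq:polynomial-growth-deduced}: the dominant drift piece is of order $h^3\abs{X_k}^{2(p-1)+3\varkappa}$ (producing the $2\varkappa-1$ factor after extracting $\abs{X_k}^{2p}$), and, when $p>1$, the dominant diffusion piece is of order $h^{1/2}\abs{X_k}^{2(p-1)+3(\varkappa+1)/2}$ (producing the $3(\varkappa-1)$ factor), which together yield the $G(\varkappa)$ exponent. On $A_k^c$ I would combine $\abs{\sin(\cdot)}\le1$, the Gaussian tail $P(\abs{\xi_{rk}}\ge t)\le 2\exp(-t^2/2)$ for the diffusion arguments, and Chebyshev's inequality for the drift event $\{\abs{a(t_k,X_k)}h>1\}$, showing $P(A_k^c\mid\mathcal{F}_{t_k})\le K_M h^M(1+\abs{X_k}^{Q_M})$ for any $M$ and hence a negligible contribution. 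Summing gives $\mean{\abs{X_{k+1}}^{2p}}\le(1+Kh)\mean{\abs{X_k}^{2p}}+Kh(1+\mean{\abs{X_k}^{2pG(\varkappa)+1}})$, which closes under the restriction $2pG(\varkappa)+1\le 2p_0$ (matching the stated range) and yields \eqref{eq:mom-sine-tamed} by discrete Gronwall. The main obstacle is the careful bookkeeping of the Taylor residues across all binomial cross terms produced by expanding $\abs{X_{k+1}}^{2p}$: since both drift and diffusion are tamed by sine, each cross term contributes both a principal part (to be matched against the monotone condition) and a residue (to be absorbed by the higher-moment term), and preserving the Itô-type drift/diffusion balance at every order in $p$ before the residues are absorbed is the technical heart of the argument.
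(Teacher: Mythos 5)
There is a genuine gap: your recursion does not close. The per-step decomposition into $A_k$ (small sine arguments) and $A_k^c$ produces, both in the Taylor residues and in the bad-event contribution, terms involving higher moments of the \emph{numerical} solution, e.g.\ $\mean{\abs{X_k}^{2pG(\varkappa)+1}}$ and $\mean{(\abs{X_k}+2)^{2p}(1+\abs{X_k}^{Q_M})}$. These are exactly the quantities the lemma is trying to bound, so invoking ``$2pG(\varkappa)+1\le 2p_0$'' is circular: $p_0$ controls moments of the exact solution through the monotone condition \eqref{eq:monotone-global}, not moments of $X_k$, and bounding the $2p$-th moment of $X_k$ via a higher moment of $X_k$ leads to an infinite regress. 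The bad-event estimate is also not ``negligible'' in any usable sense: conditionally on $\mathcal{F}_{t_k}$ the drift event $\{\abs{a(t_k,X_k)}h>1\}$ is deterministic, and the bound $P(A_k^c\mid\mathcal{F}_{t_k})\le K_M h^M(1+\abs{X_k}^{Q_M})$ with $Q_M\sim\varkappa M$ is worthless without a prior moment bound at order $Q_M$; even the crude a priori bound $\abs{X_k}\le\abs{X_0}+2k\lesssim h^{-1}$ (which you do not exploit) gives a contribution of size $h^{M(1-\varkappa)-2p}$, which blows up for $\varkappa>1$.

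The missing idea is the paper's nested-event (first-exit) construction: one works on $\tilde\Omega_{R,k}=\{\abs{X_l}\le R(h),\ l=0,\dots,k\}$ with the cutoff $R(h)=h^{-1/G(\varkappa)}$, so that every superlinear excess is absorbed pathwise, e.g.\ $h^2\abs{X_k}^{2p+2\varkappa-1}\le h\abs{X_k}^{2p}$ and $h^{l/2-1}\abs{X_k}^{2p+l(\varkappa-1)/2}\le\abs{X_k}^{2p}$ on $\tilde\Omega_{R,k}$; this yields a closed Gronwall recursion for $\mean{\chi_{\tilde\Omega_{R,k}}\abs{X_k}^{2p}}$ involving only the same moment order. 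The complement is then handled by writing $\chi_{\tilde\Lambda_{R,k}}=\sum_l\chi_{\tilde\Omega_{R,l-1}}\chi_{\abs{X_l}>R}$, using the deterministic consequence of $\abs{\sin(\cdot)}\le1$, namely $\abs{X_k}\le\abs{X_0}+2k$, together with H{\"o}lder and Markov applied to the already-bounded good-event moments of order about $(2p+1)G(\varkappa)$; it is precisely this step, via the monotone condition's coefficient $(2p_0-1)/2$, that produces the stated restriction $2p<2p_0/G(\varkappa)-1$ (your condition $2pG(\varkappa)+1\le 2p_0$ does not match it). Your small/large-argument splitting of the sine and the Gaussian-tail estimate are not needed and cannot substitute for this mechanism; without the uniform cutoff on the whole past trajectory and the pathwise linear-growth bound, the moment estimate cannot be closed.
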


\begin{proof}
We   consider only the case
$\varkappa>1$ while  the case $\varkappa=1$ (i.e., when $a(t,x)$ is globally Lipschitz) can be derived similarly.  

The key to prove the boundedness of moments is to estimate the growth of the solution under some events \begin{equation}
\tilde{\Omega}_{R,k}:=\{\omega:|X_{l}|\leq R(h),\ l=0,\ldots,k\},\ \label{event}%
\end{equation}
where $hR^{\varkappa}(h)<1$ such that
\begin{equation} \label{eq:mom-bound-nonrareevent}
  \mean{\chi_{\tilde{\Omega}_{R,k}}(\omega)\abs{X_k}^{2p} }\leq K(1+  \mean{\abs{X_0}^{2p} }).
\end{equation}
For the  compliments of  $\tilde{\Omega}_{R,k},$ denoted by $\tilde{\Lambda}_{R,k},$
we will prove the boundedness of moments starting from the following observation for \eqref{eq:sine-tamed} that
\begin{equation} \label{eq:sine-tame-bound}%
\abs{X_{k+1}}\leq\abs{X_k}  +2\leq \abs{X_0}+2(k+1).
\end{equation}

  We first prove the lemma for
integer $p\geq1.$ We have
\begin{eqnarray}\label{eq:mom-bound-cross-term-key}
  && \mean{\chi_{\tilde{\Omega}_{R,k+1}}(\omega)\abs{X_{k+1}}^{2p} } \\
   &\leq &
\mean{\chi_{\tilde{\Omega}_{R,k}}(\omega)\abs{X_{k+1}}^{2p}}   = \mean{\chi_{\tilde{\Omega}_{R,k}}(\omega)\abs{(X_{k+1}-X_{k})+X_{k} }^{2p} } \notag\\
& \leq&\mean{\chi_{\tilde{\Omega}_{R,k}}(\omega)\abs{X_k}^{2p} }%
+K\sum_{l=3}^{2p}\mathbb{E}\chi_{\tilde{\Omega}_{R,k}}(\omega)\left\vert
X_{k}\right\vert ^{2p-l}|X_{k+1}-X_{k}|^{l}+\mean{\chi_{\tilde{\Omega}_{R,k}}(\omega)\abs{X_k}^{2p-2}A},\notag
\end{eqnarray}
where $\displaystyle A =  \chi_{\tilde{\Omega}_{R,k}}(\omega) \mean{  2p(X_{k},X_{k+1}-X_{k})+p(2p-1)|X_{k+1}-X_{k}|^{2} |\mathcal{F}_{t_k}}.$

Since $\xi_{rk}$ are
independent of $\mathcal{F}_{t_{k}}$ and  the normal distribution  is symmetric, we obtain
\begin{equation}
   \chi_{\tilde{\Omega}_{R,k}}\mean{  \sum_{r=1}%
^{m}\sigma_{r}(t_{k},X_{k})\xi_{rk}\sqrt{h}|\mathcal{F}%
_{t_k}} =0,\label{tml41}
\end{equation}
and
\begin{equation}
\chi_{\tilde{\Omega}_{R,k} }
\mean{  \sum_{r=1}^m\abs{ \sigma_{r}(t_k,X_k)\xi_{rk} }^2|\mathcal{F}_{t_k} }
 =\chi_{\tilde{\Omega}_{R,k} } \sum_{r=1}^{m}\abs{\sigma_{r}(t_{k},X_k)}^2 . \label{tml42}%
\end{equation}
Similarly, we have, also by the asymmetry of the sine function,
\begin{equation}
   \chi_{\tilde{\Omega}_{R,k}}\mean{ \sin( \sum_{r=1}^{m}\sigma_{r}(t_{k},X_{k})\xi_{rk}\sqrt{h}) |\mathcal{F}  _{t_k}} =0.
\end{equation}
Then the conditional expectation in  \eqref{eq:mom-bound-cross-term-key} becomes
\begin{eqnarray*}
A  & = &2p\chi_{\tilde{\Omega}_{R,k}}\mean{(X_k, \sin(a(t_k,X_k)h)
   +\frac{2p-1}{2}\abs{X_{k+1}-X_k}^2 | \mathcal{F}_{t_k}}   \\
&=&2p\chi_{\tilde{\Omega}_{R,k}} \mean{(X_k, a(t_k,X_k)h)
    +\frac{2p-1}{2}(\abs{\sin(a(t_{k},X_{k})h)}^2+ \abs{  \sin(\sum_{r=1}^{m} \sigma_{r}(t_{k},X_{k})\xi_{rk}\sqrt{h})}^2)| \mathcal{F}_{t_k}}   \\
&&+ 2p\chi_{\tilde{\Omega}_{R,k}} \mean{(X_k,  a(t_k,X_k)h - \sin(a(t_k, X_k)h) )| \mathcal{F}_{t_k}}   \\
&\leq& 2p\chi_{\tilde{\Omega}_{R,k}}h\mean{ (X_{k},a(t_{k},X_{k})  +
 \frac{2p-1}{2} \sum_{r=1}^{m}|\sigma_{r}(t_{k},X_{k})|^{2}  |\mathcal{F}_{t_{k} }} + p(2p-1)\chi_{\tilde{\Omega}_{R,k}}\abs{a(t_{k},X_{k})}^2h^{2}\notag\\
&  &  + 2p\chi_{\tilde{\Omega}_{R,k}}  \abs{X_k}\abs{a(t_{k},X_{k})}^2h^2,
\end{eqnarray*}
where we have used the fact that $\abs{\sin(y)}\leq\abs{y}$ and the following estimate
\begin{equation}\label{eq:basic-inequality-sinetamed}
 \abs{y-\sin(y)}=\abs{(1-\cos(\theta y))y}\leq 2\abs{y}\abs{\sin(\theta y/2)}^2,\quad \text{for some}    \theta \in [0,1].
\end{equation}
 In fact, by Taylor'expansion with the remainder in Lagrange form,  there exists some  $\theta \in [0,1]$ such that $y-\sin(y) =(1 -\cos(\theta y))y$. 
Using the global monotone condition \eqref{eq:monotone-global}  and the growth condition \eqref{eq:polynomial-growth-deduced},  we obtain%
\begin{align}\label{eq:mom-bound-cross-conditionalexp}
A  &  \leq K\chi_{\tilde{\Omega}_{R,k}} (h+\abs{X_k}^{2}h + \abs{X_k}^{2\varkappa}h^{2} + \abs{ X_k }^{2\varkappa+1}h^2).
\end{align}

Now consider the second term in \eqref{eq:mom-bound-cross-term-key} :
\begin{eqnarray}\label{eq:mom-bound-cross-others}
&&   \mean{ \chi_{\tilde{\Omega}_{R,k}}(\omega)\abs{ X_k}^{2p-l}\abs{X_{k+1}-X_k}^l}  \\
&  \leq & K\mean{\chi_{\tilde{\Omega}_{R,k}}(\omega)\abs{ X_k} ^{2p-l} ( h^{l}|a(t_{k},X_{k})|^{l}+h^{l/2}\sum
_{r=1}^{m}|\sigma_{r}(t_{k},X_{k})|^{l}|\xi_{rk}|^{^{l}} ) }\notag\\
&  \leq & K\mathbb{E}\chi_{\tilde{\Omega}_{R,k}}(\omega)\left\vert
X_{k}\right\vert ^{2p-l}h^{l/2}\left[  1+h^{l/2}|X_k|^{l\varkappa}+|X_{k}|^{l(\varkappa+1)/2}\right]
,\notag
\end{eqnarray}
where we used the growth condition \eqref{eq:polynomial-growth-deduced} and the fact that
$\chi_{\tilde{\Omega}_{R,k}}(\omega)$ and $X_{k}$   are independent of $\xi_{rk}$.
 Then by  \eqref{eq:mom-bound-cross-term-key}, \eqref{eq:mom-bound-cross-conditionalexp}, and \eqref{eq:mom-bound-cross-others}, we have
\begin{eqnarray} \label{tml9}
&&  \mean{ \chi_{\tilde{\Omega}_{R,k+1}}(\omega)|X_{k+1}|^{2p} }\\
&  \leq&\mean{\chi_{\tilde{\Omega}_{R,k}}(\omega)|X_{k}|^{2p} +Kh\mathbb{E}\chi_{\tilde{\Omega}_{R,k}}(\omega)\abs{X_k}^{2p-2}\left[  1+|X_{k}|^{2}+h|X_{k}|^{2\varkappa+1} }\right] \notag\\
& & +K\sum_{l=3}^{2p}\mean{ \chi_{\tilde{\Omega}_{R,k}}(\omega)\abs{X_k} ^{2p-l}h^{l/2}\left[  1+h^{l/2}|X_k|^{l\varkappa}+|X_{k}|^{l(\varkappa+1)/2}\right] }
\notag\\
& \leq & \mean{\chi_{\tilde{\Omega}_{R,k}}(\omega)|X_{k}|^{2p}}
+Kh\mean{\chi_{\tilde{\Omega}_{R,k}}(\omega)\abs{X_k}^{2p }}+K\sum_{l=2}^{2p}\mathbb{E}\chi_{\tilde{\Omega}_{R,k}}(\omega)\left\vert
X_{k}\right\vert ^{2p-l}h^{l/2}\notag\\
&  &+Kh^{2}\mean{ \chi_{\tilde{\Omega}_{R,k}}(\omega)\abs{X_k}  ^{2p+2\varkappa-1} }+Kh\sum_{l=3}^{2p}\mathbb{E}\chi
_{\tilde{\Omega}_{R,k}}(\omega)\left\vert X_{k}\right\vert ^{2p+l(\varkappa
-1)/2}h^{l/2-1}.\notag
\end{eqnarray}

If  we choose
\begin{equation}\label{eq:range-rare-event}
R=R(h)= h^{-1/G(\varkappa)},\quad\mbox{where}\quad G(\varkappa)=\max(2\varkappa-1, \chi_{p>1}3(\varkappa-1)),
\end{equation}
we get, for $l=3,\ldots,2p,$
\begin{eqnarray*}
\chi_{\tilde{\Omega}_{R,k}}(\omega) \abs{ X_k}^{2p+2\varkappa-1}h  & \leq&  \chi_{\tilde{\Omega}_{R(h),k}}(\omega)\abs{X_k}^{2p}, \\
\chi_{\tilde{\Omega}_{R(h),k}}(\omega)\abs{X_k}^{2p+l(\varkappa-1)/2}h^{l/2-1} &\leq&\chi_{\tilde{\Omega}_{R(h),k}}(\omega)\abs{X_k}^{2p}.
\end{eqnarray*}
Thus we have for  \eqref{eq:mom-bound-cross-term-key},
\begin{align}
&  \mean{\chi_{\tilde{\Omega}_{R(h),k+1}}(\omega)|X_{k+1}|^{2p} }\\
&  \leq\mean{\chi_{\tilde{\Omega}_{R(h),k}}(\omega)|X_{k}|^{2p}}%
+Kh\mean{\chi_{\tilde{\Omega}_{R(h),k}}(\omega)\abs{X_k} ^{2p}  }+K\sum_{l=1}^{p}\mean{ \chi_{\tilde{\Omega}_{R(h),k}}(\omega
)\abs{X_ k}  ^{2(p-l)}}h^{l}\notag\\
&  \leq  \mean{\chi_{\tilde{\Omega}_{R(h),k}}(\omega)|X_{k}|^{2p}}
+Kh\mean{\chi_{\tilde{\Omega}_{R(h),k}}(\omega)|X_{k}|^{2p}}+Kh,\notag
\end{align}
where in the last line we have used Young's inequality. From here, we get \eqref{eq:mom-bound-nonrareevent} by
Gronwall's inequality.

It remains to estimate $\mean{\chi_{\tilde{\Lambda}_{R(h),k}}%
(\omega)|X_{k}|^{2p}}$. We recall that, see \cite[Section 3]{TreZhang13},
\begin{equation*}
\chi_{\tilde{\Lambda}_{R,k}} =\sum_{l=0}^{k}\chi_{\tilde{\Omega}_{R,l-1}}\chi_{|X_{l}|>R},
\end{equation*}
where we put $\chi_{\tilde{\Omega}_{R,-1}}=1.$ Then, using \eqref{eq:sine-tame-bound}, \eqref{eq:mom-bound-nonrareevent}, and H{\"o}lder's and Markov's
inequalities, we obtain
\begin{eqnarray}\mean{\chi_{\tilde{\Lambda}_{R(h),k}}%
(\omega)|X_{k}|^{2p}}
&\leq &\left(  \mean{\abs{\abs{X_0}+2k}^{4p}}\right)  ^{1/2}\sum_{l=0}^{k}\frac{\left(
\mean{\chi_{\tilde{\Omega}_{R(h),l-1}}|X_{l}|^{2(2p+1)G(\varkappa)}}\right)  ^{1/2}}{R(h)^{(2p+1)G(\varkappa)}}\notag\\
&\leq &K\left( \mean{\abs{\abs{X_0}+2k}^{4p}}\right)  ^{1/2}\left(  \mean{(1+\abs{X_0})^{2(2p+1)G(\varkappa)} }\right)  ^{1/2}kh^{2p+1}\notag\\
&\leq& K(1+\mean{\abs{X_0}^{4p+2(2p+1)G(\varkappa)} }).\notag
\end{eqnarray}
From   here  and  \eqref{eq:mom-bound-nonrareevent}, we obtain
\eqref{eq:mom-sine-tamed}  for integer $p\geq1$.
By Jensen's inequality,  \eqref{eq:mom-sine-tamed} holds for non-integer $p$ as well. 
\end{proof}

\subsection{One-step error }
The next lemma provides estimates for the one-step error of the balanced  Euler scheme
\eqref{eq:sine-tamed}.

\begin{lem}\label{lem:local-sine-tamed}
Assume that  \eqref{eq:sde-moment-bound-global-monotone} holds. Assume that the coefficients $a(t,x)$ and $\sigma_{r}(t,x)$ have continuous first-order
partial derivatives in $t$ and that these derivatives and the coefficients
satisfy inequalities of the form  \eqref{eq:polynomial-growth-deduced}. Then the scheme
\eqref{eq:sine-tamed}
 satisfies the inequalities
\eqref{eq:one-step-local-mean} and
\eqref{eq:one-step-local-ms}   with
$q_{1}=3/2$ and $q_{2}=1,$ respectively.

Moreover,  consider additive noise,   i.e., $\sigma_r(t,x)=\sigma_r(t)$.
If  the coefficient  $a(t,x)$  also has continuous first-order and second-order derivatives in $x$ and their derivatives satisfy the polynomial growth condition of the form
\eqref{eq:polynomial-growth-deduced},      then we have
   $q_{1}=2$ and $q_{2}=3/2$.
\end{lem}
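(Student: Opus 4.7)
The plan is to compare $\bar{X}_{t,x}(t+h)$ to the classical Euler-Maruyama one-step approximation
\[
\tilde{X}_{t,x}(t+h) = x + a(t,x)h + \sum_{r=1}^{m} \sigma_r(t,x)\bigl(w_r(t+h)-w_r(t)\bigr)
\]
and to split the one-step error as $X_{t,x}(t+h) - \bar{X}_{t,x}(t+h) = E_1 + E_2$, where $E_1 := X_{t,x}(t+h)-\tilde{X}_{t,x}(t+h)$ and $E_2 := \tilde{X}_{t,x}(t+h)-\bar{X}_{t,x}(t+h)$. The piece $E_1$ is the classical Euler one-step error and is handled exactly as in \cite[Section 3]{TreZhang13}: an Ito-Taylor expansion of $a(s,X_{t,x}(s))$ and $\sigma_r(s,X_{t,x}(s))$, combined with the a priori moment bound \eqref{eq:sde-moment-bound-global-monotone} and the polynomial growth bounds \eqref{eq:polynomial-growth-deduced}, yields $\abs{\mean{E_1}} \leq K(1+\abs{x}^{2\alpha})^{1/2}h^{3/2}$ and $(\mean{\abs{E_1}^{2p}})^{1/(2p)} \leq K(1+\abs{x}^{2\alpha p})^{1/(2p)} h$ in the general multiplicative-noise case. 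Under additive noise, an extra application of Ito's formula to $a(s,X_{t,x}(s)) - a(t,x)$, made possible by the assumed $C^2_x$ smoothness and polynomial growth of $\partial_t a,\partial_x a,\partial_{xx} a$, lifts these two orders to $h^2$ and $h^{3/2}$ respectively; the gain comes from the fact that the stochastic integral terms arising in the expansion have zero mean and $L^{2p}$-norm $O(h^{1/2})$.

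The novel step is the estimate of $E_2$, for which the key tool is the elementary Taylor bound $\abs{y-\sin(y)} \leq \abs{y}^3/6$. Writing
\[
E_2 = \bigl[a(t,x)h - \sin(a(t,x)h)\bigr] + \Bigl[\sum_{r=1}^{m}\sigma_r(t,x)\xi_{r}\sqrt{h} - \sin\Bigl(\sum_{r=1}^{m}\sigma_r(t,x)\xi_{r}\sqrt{h}\Bigr)\Bigr],
\]
the first bracket is deterministic and bounded by $\tfrac{1}{6}\abs{a(t,x)}^3 h^3 \leq K(1+\abs{x}^{3\varkappa})h^3$. The second bracket is an odd function of its Gaussian argument $\sum_r \sigma_r(t,x)\xi_r\sqrt{h}$, whose law (conditional on $\mathcal{F}_t$) is symmetric about zero, so its conditional expectation vanishes; hence $\abs{\mean{E_2}} \leq K(1+\abs{x}^{3\varkappa})h^3$. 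For the $L^{2p}$ estimate one uses
\[
\abs{E_2}^{2p} \leq K\Bigl(\abs{a(t,x)}^{6p}h^{6p} + \bigl|\sum_{r=1}^{m}\sigma_r(t,x)\xi_r\bigr|^{6p} h^{3p}\Bigr),
\]
takes expectation, and applies \eqref{eq:polynomial-growth-deduced} together with the Gaussian moments of $\xi_r$ to obtain $(\mean{\abs{E_2}^{2p}})^{1/(2p)} \leq K(1+\abs{x}^\gamma)^{1/(2p)} h^{3/2}$ for an explicit $\gamma$ depending on $p$ and $\varkappa$. Under additive noise, $\abs{\sigma_r(t)}$ is a bounded function of $t$ alone, and the same $h^{3/2}$ rate persists with no extra $x$-growth.

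Combining the two estimates by the triangle inequality delivers the stated orders: $q_1 = \min(3/2, 3) = 3/2$ and $q_2 = \min(1, 3/2) = 1$ in the general case, and $q_1 = \min(2,3) = 2$, $q_2 = \min(3/2, 3/2) = 3/2$ in the additive case. The main technical nuisance is bookkeeping the polynomial weight $(1+\abs{x}^{2\alpha p})^{1/(2p)}$ through the successive applications of \eqref{eq:polynomial-growth-deduced} and \eqref{eq:sde-moment-bound-global-monotone}; the conceptual point that explains why the sine taming preserves the Euler order is that both sine-induced corrections are strictly higher order than the Euler-Maruyama error, namely $O(h^3)$ in mean and $O(h^{3/2})$ in $L^{2p}$.
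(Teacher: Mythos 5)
Your proposal is correct and follows essentially the same route as the paper: the same splitting through the classical Euler one-step approximation, the same use of the symmetry of the Gaussian increments to kill the mean of the sine correction, and a Taylor-type bound on $\abs{y-\sin(y)}$ (you use $\abs{y}^3/6$ where the paper uses $\abs{y-\sin(y)}\leq 2\abs{y}\abs{\sin(\theta y/2)}^2$, which is immaterial), with the additive-noise lift obtained via Ito's formula on $a$ together with the $t$-smoothness of $\sigma_r(t)$, exactly as in the paper's Step 1--Step 2 argument.
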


The proof of this lemma is given below.
According to Theorem~\ref{thm:ms-convergence-local-global}, the following proposition can be readily deduced
from Lemmas~\ref{lem:mom-sine-tamed} and~\ref{lem:local-sine-tamed}.
\begin{thm}\label{prp:sine-tamed-order}
Under the assumptions of Lemmas \ref{lem:mom-sine-tamed} and~\ref{lem:local-sine-tamed}.
 the balanced Euler  scheme \eqref{eq:sine-tamed} has a mean-square convergence order
half, i.e., for it the inequality \eqref{eq:global-err-ms} holds with $q=1/2.$

For additive noise, we have  that the scheme \eqref{eq:sine-tamed} is of first-order convergence, i.e. $q=1$.
\end{thm}

We need the following lemma for the proof.

\begin{lem}[\cite{TreZhang13}] \label{lem:sde-nonlinear-holder}
Let a function $\varphi(t,x)$ have
continuous first-order partial derivative in t and that the derivative and the function
satisfy inequalities of the form (2.3). For $l\geq 1$ and  $s\geq t$, we have
\begin{equation} \label{eq:sde-nonlinear-holder}
\mean{\abs{\varphi(s,X_{t,x}(s))- \varphi(t, x)}^l} \leq K(1+\abs{x}^{2\varkappa l -l})[ (s-t)^{l/2}+(s-t)^l ].
\end{equation}
\end{lem}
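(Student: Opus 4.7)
The plan is to decompose the increment $\varphi(s,X_{t,x}(s))-\varphi(t,x)$ into a pure time-change piece and a pure space-change piece,
\[
\varphi(s,X_{t,x}(s))-\varphi(t,x)=\bigl[\varphi(s,X_{t,x}(s))-\varphi(t,X_{t,x}(s))\bigr]+\bigl[\varphi(t,X_{t,x}(s))-\varphi(t,x)\bigr],
\]
take $L^{l}$-norms of both sides, and control each bracket separately via the triangle inequality before combining.

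For the time-change bracket I would write it as $\int_{t}^{s}\partial_{\theta}\varphi(\theta,X_{t,x}(s))\,d\theta$, apply Jensen's inequality to move the $l$-th power inside the integral, and then use the assumed polynomial growth of $\partial_{t}\varphi$ (of the form \eqref{eq:polynomial-growth-deduced}) together with the moment bound \eqref{eq:sde-moment-bound-global-monotone} on $X_{t,x}(s)$. This produces a contribution of order $(s-t)^{l}(1+\abs{x}^{l\varkappa})$, comfortably absorbed into the target since $l\varkappa\leq l(2\varkappa-1)$ for $\varkappa\geq 1$.

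For the space-change bracket I would invoke a Lipschitz-type inequality of the form \eqref{eq:local-lip} (which the hypothesis grants for $\varphi$) to obtain
\[
\abs{\varphi(t,X_{t,x}(s))-\varphi(t,x)}^{l}\leq K\bigl(1+\abs{X_{t,x}(s)}^{l(\varkappa-1)}+\abs{x}^{l(\varkappa-1)}\bigr)\,\abs{X_{t,x}(s)-x}^{l},
\]
and then apply Cauchy--Schwarz in expectation to decouple the polynomial factor from the increment. The polynomial factor is controlled by \eqref{eq:sde-moment-bound-global-monotone} (choosing $p_{0}$ large enough), yielding a factor proportional to $(1+\abs{x}^{l(\varkappa-1)})$ after taking square roots. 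The increment moment $\mean{\abs{X_{t,x}(s)-x}^{2l}}^{1/2}$ I would estimate directly from the SDE: Jensen on the drift integral, Burkholder--Davis--Gundy on each stochastic integral, then closing the expectation via \eqref{eq:polynomial-growth-deduced} and \eqref{eq:sde-moment-bound-global-monotone}. This yields a bound of the form $K\bigl[(s-t)^{l/2}(1+\abs{x}^{l(\varkappa+1)/2})+(s-t)^{l}(1+\abs{x}^{l\varkappa})\bigr]$.

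Multiplying the polynomial and increment factors, and checking the exponent inequality $(\varkappa-1)+(\varkappa+1)/2=3\varkappa/2-1/2\leq 2\varkappa-1$ for $\varkappa\geq 1$, shows both resulting terms sit under the envelope $K(1+\abs{x}^{l(2\varkappa-1)})[(s-t)^{l/2}+(s-t)^{l}]$, matching \eqref{eq:sde-nonlinear-holder}. The main obstacle is not any single estimate but the bookkeeping of polynomial exponents and making sure every moment $\mean{\abs{X_{t,x}(s)}^{q}}$ actually invoked has $q<2p_{0}$; this amounts to an implicit lower bound on $p_{0}$ in terms of $l$ and $\varkappa$, consistent with the ``sufficiently large $p_{0}$'' convention already in force in the paper.
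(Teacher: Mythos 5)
Your proposal is correct and follows essentially the same route as the proof this paper relies on (the paper does not reprove the lemma but defers to \cite[Appendix C]{TreZhang13}): split the increment into a time part and a space part, bound the time part via the growth of $\partial_t\varphi$ and the moment bound \eqref{eq:sde-moment-bound-global-monotone}, and bound the space part via the local Lipschitz inequality \eqref{eq:local-lip}, Cauchy--Schwarz, and the standard $\mean{\abs{X_{t,x}(s)-x}^{2l}}$ estimate from the SDE. The exponent bookkeeping ($l\varkappa\leq l(2\varkappa-1)$ and $l(3\varkappa-1)/2\leq l(2\varkappa-1)$ for $\varkappa\geq1$) checks out, and the implicit largeness requirement on $p_{0}$ is exactly the convention already in force.
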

The proof of  Lemma  \ref{lem:sde-nonlinear-holder} can be found in \cite[Appendix C]{TreZhang13}.
Now we prove Lemma \ref{lem:local-sine-tamed}, the order of accuracy for one-step error of the balanced Euler scheme  \eqref{eq:sine-tamed}.
\begin{proof}
 Now consider the one-step approximation of the SDE \eqref{eq:sde-ito-vec}, which
corresponds to the balanced method \eqref{eq:sine-tamed}: 
\begin{equation}\label{eq:sine-tamed-one}%
X=x+\sin(a(t,x)h)+\sin(\sum_{r=1}^{m}\sigma_{r}(t,x)\xi_{r}\sqrt{h}),
\end{equation}
and the one-step approximation corresponding to the explicit Euler scheme:
\begin{equation}\label{Euler_one}%
\tilde{X}=x+a(t,x)h+\sum_{r=1}^{m}\sigma_{r}(t,x)\xi_{r}\sqrt{h}.
\end{equation}

Step 1.
We start with analysis of the one-step error of the Euler scheme:
\[
\tilde{\rho}(t,x):=X_{t,x}(t+h)-\tilde{X}.
\]
By  Lemma \ref{lem:sde-nonlinear-holder}, we have
\begin{eqnarray}\label{new1-1}
\abs{\mean{\tilde{\rho}(t,x)}}&=&\abs{ \mathbb{E}\int_{t}^{t+h}%
(a(s,X_{t,x}(s))-a(t,x))ds} \\
&&\leq\mathbb{E}\int_{t}^{t+h}%
|a(s,X_{t,x}(s))-a(t,x)|ds\label{new1}\leq  Kh^{3/2}(1+|x|^{2\varkappa-1}).\notag
\end{eqnarray}
Also we have
\begin{eqnarray}\label{new3}
\mean{\abs{\tilde{\rho}}^{2p}(t,x)}  &  \leq & K\mathbb{E}\left\vert \int_{t}%
^{t+h}(a(s,X_{t,x}(s))-a(t,x))ds\right\vert ^{2p}\\
& & +K\sum_{r=1}^{q}\mathbb{E}\left\vert \int_{t}^{t+h}\left(  \sigma
_{r}(s,X_{t,x}(s))-\sigma_{r}(t,x)\right)  dw_{r}(s)\right\vert ^{2p}%
.\notag
\end{eqnarray}
By Lemma \ref{lem:sde-nonlinear-holder}, we
get for the first term in (\ref{new3}):
\begin{eqnarray}\label{eq:local-ms-error-drift}
\mean{\abs{\int_{t}^{t+h}(a(s,X_{t,x}(s))-a(t,x))ds }^{2p} }&\leq& Kh^{2p-1}\int_{t}^{t+h}\mean{\abs{a(s,X_{t,x}(s))-a(t,x)}^{2p} }\,ds \notag\\
&&\leq Kh^{3p}(1+|x|^{4p\varkappa-2p}).
\end{eqnarray}
Using the inequality for powers of Ito integrals from \cite[pp. 26]{GihSko-B72} and  Lemma \ref{lem:sde-nonlinear-holder},
we obtain
\begin{eqnarray}\label{new5}
&&\mean{\abs{\int_{t}^{t+h}\left(  \sigma_{r}(s,X_{t,x}(s))-\sigma_{r}(t,x)\right)  dw_{r}(s) } ^{2p} }\\
&\leq& Kh^{p-1}\int_{t}^{t+h}\mean{\abs{ \sigma_{r}(s,X_{t,x} (s))-\sigma_{r}(t,x) }^{2p} }\,ds\leq Kh^{2p}(1+|x|^{4p\varkappa
-2p}).\notag
\end{eqnarray}
It follows from (\ref{new3})-(\ref{new5}) that
\begin{equation}
\mean{\abs{\tilde{\rho}}^{2p}(t,x)}\leq Kh^{2p}(1+|x|^{4p\varkappa-2p}).
\label{new2}%
\end{equation}

Step 2. Now we compare the one-step approximations \eqref{eq:sine-tamed-one} of the balanced
scheme  \eqref{eq:sine-tamed} and (\ref{Euler_one}) of the Euler scheme:
\begin{equation}
X=x+\sin(a(t,x)h)+\sin(\sum_{r=1}^{m}\sigma_{r}(t,x)\xi_{r}\sqrt{h} )=\tilde{X}%
-\rho(t,x), \label{new7}%
\end{equation}
where
\[
\rho(t,x)=a(t,x)-\sin(a(t,x)) +\sum_{r=1}^{m}\sigma_{r}(t,x)\xi_{r}\sqrt{h}  -\sin(\sum_{r=1}^{m}\sigma_{r}(t,x)\xi_{r}\sqrt{h})
\]

By the symmetry of the normal distribution   and the asymmetry of sine function, we have
\[\mean{  \sin(\sum_{r=1}^{m}\sigma_{r}(t,x)\xi_{r}\sqrt{h})} =0,\]
and then by the inequality  \eqref{eq:basic-inequality-sinetamed} and the fact that $\abs{\sin(y)}\leq \abs{y}$, we have
\begin{eqnarray} \label{eq:local-err-mean-euler-sinetamed}
 \abs{\mean{\rho(t,x)} }
 &=&  \abs{ \mean{a(t,x)h -\sin(a(t,x)h }}
 \leq   \abs{  a(t,x)h} 2\abs{\sin( \theta a(t,x)h/2)}^2  \notag  \\
  &\leq& \abs{  a(t,x)h}  ^2 \leq  Kh^{2}(1+|x|^{2\varkappa}),
\end{eqnarray}
whence,  from (\ref{new7}) and (\ref{new1-1}), we obtain that \eqref{eq:sine-tamed-one}
satisfies \eqref{eq:one-step-local-mean} with $q_{1}=3/2.$

From the inequality  \eqref{eq:basic-inequality-sinetamed} and the fact that $\abs{\sin(y)}\leq \abs{y}$, we  can readily obtain
\begin{eqnarray} \label{eq:local-err-ms-euler-sinetamed}
\mean{\abs{\rho}^{2p}(t,x)}
&\leq& K\mean{ \abs{a(t,x)h - \sin(a(t,x)h }^{2p}}  \notag \\
&&  + K \mean{\abs{\sum_{r=1}^{m}\sigma_{r}(t,x)\xi_{r}\sqrt{h} -  \sin(\sum_{r=1}^{m}\sigma_{r}(t,x)\xi_{r}\sqrt{h})  }^{2p} }  \notag\\
&\leq&  K\abs{a(t,x)h}^{3p}   + K  h^{3p}\mean {\abs{ \sum_{r=1}^{m}\abs{\sigma_{r}(t,x)\xi_r} }  ^{6p}  }\leq Kh^{3p}%
(1+|x|^{3p(\varkappa+1)}), \notag
\end{eqnarray}
which together with (\ref{new7}) and (\ref{new2}) implies that \eqref{eq:sine-tamed-one} satisfies \eqref{eq:one-step-local-ms} with $q_{2}=1$. $\ \square$

For additive noise,  we assume that   the derivatives $\frac{\partial a }{\partial t}$ , $\frac{\partial a_i}{\partial x_j}$ and $\frac{\partial ^2 a_i}{\partial x_j \partial x_k}$
are continuous. Then we can write, by the Ito formula,
     \begin{eqnarray*}
  a(s,X_{t,x}(s)) - a(t,x) &=&\int_t^s \partial_t a(\theta,X_{t,x}(\theta))\,d\theta \\
  && +\sum_{r=1}^m \int_t^s  \Lambda_r a(\theta, X_{t,x}(\theta))\,dw_r(\theta)+\int_t^s L  a(\theta,X_{t,x}(\theta))\,d\theta .
  \end{eqnarray*}
   where $\Lambda_r= \sigma_r^\top \frac{\partial}{\partial x} = \sum_{i=1}\sigma_{i,r}\frac{\partial}{\partial x_i}$ and
$L=\frac{\partial}{\partial t} + a^\top \frac{\partial}{\partial x} + \frac{1}{2}\sum_{r=1}^m \sum_{i,j=1}^{d}\sigma_{i,r}\sigma_{j,r} \frac{\partial^2}{\partial x_i \partial x_j}$.
 Assuming that $\frac{\partial a }{\partial t}$ , $\frac{\partial a_i}{\partial x_j}$ and $\frac{\partial ^2 a_i}{\partial x_j \partial x_k}$ satisfies  the growth condition of the form \eqref{eq:polynomial-growth-deduced},   we can readily obtain
\begin{eqnarray} \label{eq:local-err-mean-euler-exact-additive}
\abs{\mean{\tilde{\rho}(t,x)}}
  &=&\abs{ \mean{\int_{t}^{t+h}(a(s,X_{t,x}(s))-a(t,x))ds}} \\
 &=& \abs{ \mean{\int_{t}^{t+h} \int_t^s \partial_t a(\theta,X_{t,x}(\theta) + L  a(\theta,X_{t,x}(\theta))\,d\theta\,ds }}  \leq Kh^{2}(1+|x|^{2\varkappa}).\notag
\end{eqnarray}

Also we have
\begin{eqnarray}\label{eq:local-err-ms-euler-exact}
\mean{\abs{\tilde{\rho}}^{2p}(t,x)}  & \leq & K\mathbb{E}\left\vert \int_{t}
^{t+h}(a(s,X_{t,x}(s))-a(t,x))ds\right\vert ^{2p} \\
& &+K\sum_{r=1}^{q}\mathbb{E}\left\vert \int_{t}^{t+h}\left(  \sigma
_{r}(s)-\sigma_{r}(t)\right)  dw_{r}(s)\right\vert ^{2p}
.\notag
\end{eqnarray}

By the inequality for powers of Ito integrals from \cite[pp. 26]{GihSko-B72} and smoothness of $\sigma_r(s)$, we have
\begin{gather}
\mean{\abs{ \int_{t}^{t+h}\left(  \sigma_{r}(s)-\sigma
_{r}(t)\right)  dw_{r}(s) } ^{2p} }
\leq Kh^{p-1}\int_{t}^{t+h}\mean{\abs{ \sigma_{r}(s)-\sigma_{r}(t)}^{2p} }ds\leq Kh^{3p}.\notag
\end{gather}
It then follows from here and  \eqref{eq:local-ms-error-drift} that
\begin{equation}\label{eq:local-err-ms-euler-exact-additive}
\mean{\abs{\tilde{\rho}}^{2p}(t,x)}\leq Kh^{3p}(1+|x|^{4p\varkappa-2p}).
\end{equation}

By \eqref{eq:local-err-mean-euler-sinetamed}  and \eqref{eq:local-err-mean-euler-exact-additive},
 we have  $q_1=2$.
 By \eqref{eq:local-err-ms-euler-exact-additive}  and \eqref{eq:local-err-ms-euler-sinetamed}, we have
 $q_2=3/2$.    Then by Theorem  \ref{thm:ms-convergence-local-global}, we have
 the scheme \eqref{eq:sine-tamed}  is first-order order convergence under additive noise.
\end{proof}

\begin{rem} 
	The  proofs for \eqref{eq:sine-tamed}  with $\mathcal{T}(\cdot)=\tanh(\cdot)$ are  similar. The   properties of the tame function we use in proofs are  a) $\abs{\sin(y)}$ is bounded, b) $\abs{\sin(y)}\leq \abs{y}$, c) $\sin(-y)=-\sin(y)$ and  also \eqref{eq:basic-inequality-sinetamed}. Note that the hyperbolic tangent has similar properties: 
	a) $\abs{\tanh(y)}\leq 1$, b) $\abs{\tanh(y)}\leq \abs{y}$, c) $\tanh(-y)=-\tanh(y)$  and also $\abs{y-\tanh(y)}\leq \tanh^2(\theta y)\abs{y}$ for some $0\leq \theta\leq 1$.
	
	Moreover, the hyperbolic tangent is  monotone while the sine function is not. The monotonicity may bring some advantages in practice even though the convergence rate will not change. For example,   we observe in  Example \ref{exm:com} that
	schemes with the hyperbolic tangent function allows 
	 larger time step sizes than  the sine tamed schemes do to obtain  accuracy and show convergence. See  Example \ref{exm:com} for more  comparison among solutions from these tamed schemes. 
\end{rem}

 \begin{rem}\label{rem:mod-tame-euler}
 Similar to the proofs above,  we can prove that
the following balanced scheme has  the same convergence  rate as
 the scheme \eqref{eq:sine-tamed} does:
 \begin{equation} \label{eq:sine-tamed-v1}
 X_{k+1} =X_k +  \mathcal{T}(a(t_k,X_k)h) +\sum_{k=1}^m \mathcal{T}(\sigma_r(t_k,X_k)\sqrt{h})\xi_k.
 \end{equation}
 Compared to \eqref{eq:sine-tamed}, the numerical solution is not bounded any more since the $\xi_k$'s can take values in the real line. However, numerical results (not resented) for both schemes (with different tame functions)  show similar error behaviors when both schemes are applied to Example \ref{exm:com}. 
 \end{rem}

    \section{The balanced Milstein scheme}

In this section, we prove that  the scheme \eqref{eq:sine-tamed-milstein} converges with strong order one.   We consider only the case when $\mathcal{T}(\cdot) =\sin(\cdot)$  as  the proof for $\mathcal{T}(\cdot)=\tanh(\cdot)$ is similar.  

\begin{lem}\label{lem:mom-sine-tamed-milstein}
Suppose Assumption \ref{asup:one-side-lip} holds with
sufficiently large $p_{0}$.  Assume the following polynomial growth for   $\Lambda_i\sigma_r(t,x)$:
\begin{equation}\label{eq:growth-derivatives}
 \sum_{i,r=1}^m \abs{\Lambda_i \sigma_r(t,x)}^2\leq K(1+ \abs{x}^{ 2\varkappa')}), \quad \varkappa\geq 0.
 \end{equation}
For all natural $N$ and all $k=0,\ldots,N$ the
following inequality holds for moments of the scheme \eqref{eq:sine-tamed}
\begin{equation}\label{eq:mom-sine-tamed-milstein}
\mathbb{E}|X_{k}|^{2p}\leq K(1+\mathbb{E}|X_{0}|^{2p\beta}),\ \ 2\leq
2p<\frac{2p_{0}}{G(\varkappa)}-1 ,
\end{equation}
where the constants $\beta\geq1$ and $K>0$ are independent of $h$ and $k$ and
\[G(\varkappa)=\max(2\varkappa-1,  2\varkappa'-2, \chi_{p>1}3(\varkappa-1)). \]
\end{lem}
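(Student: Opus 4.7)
The plan is to mirror the stopping-time argument of Lemma~\ref{lem:mom-sine-tamed}, the only substantive changes coming from the additional sine term $\sin(Y)$, where $Y=\sum_{i,r}\Lambda_i\sigma_r(t_k,X_k)I_{i,r,t_k}$. I would introduce the same events $\tilde\Omega_{R,k}=\{|X_l|\le R(h),\ l=0,\ldots,k\}$, now with $R(h)=h^{-1/G(\varkappa)}$ for the enlarged $G(\varkappa)=\max(2\varkappa-1,\,2\varkappa'-2,\,\chi_{p>1}3(\varkappa-1))$, and split $\mean{\abs{X_k}^{2p}}$ into contributions on $\tilde\Omega_{R,k}$ and on its complement $\tilde\Lambda_{R,k}$. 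Since each of the three sines in \eqref{eq:sine-tamed-milstein} is bounded by $1$, the deterministic estimate $\abs{X_{k+1}}\le\abs{X_k}+3\le\abs{X_0}+3(k+1)$ is still valid, and the rare-event piece is disposed of by the same H\"older--Markov argument used at the end of the proof of Lemma~\ref{lem:mom-sine-tamed}.

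The substance of the proof is the conditional moment inequality on $\tilde\Omega_{R,k}$. When $\abs{X_{k+1}}^{2p}$ is expanded around $X_k$ as in \eqref{eq:mom-bound-cross-term-key}, the only genuinely new contributions come from $\sin(Y)$, and two observations tame them. First, since $\mean{I_{i,r,t_k}|\mathcal F_{t_k}}=0$ for every pair $(i,r)$ while $\Lambda_i\sigma_r(t_k,X_k)$ is $\mathcal F_{t_k}$-measurable, the Taylor estimate $\abs{\sin y-y}\le\abs{y}^3/6$ yields
\begin{equation*}
\abs{\mean{(X_k,\sin(Y))|\mathcal F_{t_k}}}\le K\abs{X_k}\,\mean{\abs{Y}^{3}|\mathcal F_{t_k}}\le Kh^{3}\abs{X_k}(1+\abs{X_k}^{3\varkappa'}),
\end{equation*}
so that $\sin(Y)$ contributes to the conditional mean only at order $h^{3}$. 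Second, $\abs{\sin y}\le\abs{y}$ together with the standard bound $\mean{\abs{I_{i,r,t_k}}^{2p}|\mathcal F_{t_k}}\le Kh^{2p}$ and \eqref{eq:growth-derivatives} yields $\mean{\abs{\sin(Y)}^{2p}|\mathcal F_{t_k}}\le Kh^{2p}(1+\abs{X_k}^{2p\varkappa'})$, and all cross products of $\sin(Y)$ with the other two sines are absorbed by the elementary $2ab\le a^{2}+b^{2}$ into the already-estimated diagonal $\abs{\sin}^{2}$ pieces.

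Combining these with the Euler-type estimates from the proof of Lemma~\ref{lem:mom-sine-tamed}, the global monotone condition \eqref{eq:monotone-global} and the growth bounds \eqref{eq:polynomial-growth-deduced}, I arrive at an inequality of the form
\begin{equation*}
\mean{\chi_{\tilde\Omega_{R,k+1}}\abs{X_{k+1}}^{2p}}\le (1+Kh)\mean{\chi_{\tilde\Omega_{R,k}}\abs{X_k}^{2p}}+Kh+\sum_{j}Kh^{\nu_j}\mean{\chi_{\tilde\Omega_{R,k}}\abs{X_k}^{2p+\mu_j}},
\end{equation*}
in which each pair $(\mu_j,\nu_j)$ records one of the extra terms. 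The choice $R(h)=h^{-1/G(\varkappa)}$ is exactly what is needed so that $h^{\nu_j}R(h)^{\mu_j}\le 1$ for every such pair: the new exponent $2\varkappa'-2$ in $G(\varkappa)$ enters precisely through the variance contribution $\mean{\abs{\sin(Y)}^{2}|\mathcal F_{t_k}}\le Kh^{2}(1+\abs{X_k}^{2\varkappa'})$, while the other two exponents are inherited from Lemma~\ref{lem:mom-sine-tamed}. Gronwall's inequality then gives the analogue of \eqref{eq:mom-bound-nonrareevent} on $\tilde\Omega_{R,k}$; concatenating with the rare-event estimate closes the argument for integer $p$, and Jensen's inequality extends it to general $p\ge 1$.

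The principal obstacle I anticipate is handling the mean contribution of $\sin(Y)$: unlike the two Euler-type sines, $\sin(Y)$ is not odd in the Brownian increments and its conditional mean does not vanish by symmetry. One must recover the needed cancellation from $\mean{Y|\mathcal F_{t_k}}=0$ via a Taylor expansion of $\sin$, and then carefully verify that every polynomial weight in $\abs{X_k}$ that appears is dominated by the enlarged $R(h)$. Once this bookkeeping is in place, the remainder of the proof is essentially identical to that of Lemma~\ref{lem:mom-sine-tamed}.
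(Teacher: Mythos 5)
Your proposal follows the paper's own route: the same stopping-time events with $R(h)=h^{-1/G(\varkappa)}$, the same expansion of $\abs{X_{k+1}}^{2p}$ around $X_k$, the same H\"older--Markov treatment of the complement $\tilde{\Lambda}_{R,k}$, and the same key cancellation coming from $\mean{I_{i,r,t_k}\,|\,\mathcal{F}_{t_k}}=0$. Two of your technical choices, however, deviate from the paper and need repair. First, the cross term between the diffusion sine and $\sin(Y)$: the paper shows its conditional expectation is exactly zero, because $\sum_r\sigma_r(t_k,X_k)\xi_{rk}\sqrt{h}$ is odd while $I_{i,r,t_k}$ is even under the reflection $w\mapsto-w$ of the Brownian increment, so the product of the two sines is an odd functional of the increment. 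Your unweighted Young inequality $2ab\le a^{2}+b^{2}$ instead creates an extra copy of $\mean{\abs{\sin(\sum_r\sigma_r\xi_{rk}\sqrt{h})}^{2}|\mathcal{F}_{t_k}}\le h\sum_r\abs{\sigma_r(t_k,X_k)}^{2}$, which is of order $h$ (no spare power of $h$ to trade against $R(h)$) and carries the superlinear weight $\abs{X_k}^{\varkappa+1}$; such a term can only be absorbed through the monotone condition \eqref{eq:monotone-global}, and doubling its coefficient from $p(2p-1)$ to $2p(2p-1)$ can exceed the available budget $2p\,(2p_0-1)/2$ when $p$ is near the top of the claimed range and $G(\varkappa)<2$. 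Either invoke the symmetry cancellation as the paper does, or use a weighted Young inequality $2ab\le\eps a^{2}+\eps^{-1}b^{2}$ with $\eps$ small, which is admissible because $p<p_0$.

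Second, for the conditional-mean contribution of $\sin(Y)$ you use $\abs{\sin y-y}\le\abs{y}^{3}/6$, which after multiplication by $\abs{X_k}^{2p-2}$ produces a term of size $h^{3}\abs{X_k}^{2p+3\varkappa'-1}$ and hence requires $h^{2}R(h)^{3\varkappa'-1}\le1$, i.e.\ $G(\varkappa)\ge(3\varkappa'-1)/2$. This is not implied by the stated $G(\varkappa)=\max(2\varkappa-1,\,2\varkappa'-2,\,\chi_{p>1}3(\varkappa-1))$ (take for instance $\varkappa=3/2$, $\varkappa'=2$), so your blanket assertion that $h^{\nu_j}R(h)^{\mu_j}\le1$ for every pair $(\mu_j,\nu_j)$ is not verified as written. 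The paper instead controls $\abs{Y-\sin Y}$ with the quadratic inequality \eqref{eq:basic-inequality-sinetamed}, arriving at an $h^{2}$ term of weight $\sum_{i,r}\abs{\Lambda_i\sigma_r}^{2}\le K(1+\abs{X_k}^{2\varkappa'})$, which is precisely what the $2\varkappa'-2$ entry of $G$ is tailored to absorb; switch to that bound (or enlarge $G$ accordingly) and carry out the exponent bookkeeping explicitly. With these two adjustments your argument coincides with the paper's proof.
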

 \begin{proof}
 The idea of the proof is similar to that for Lemma \ref{lem:mom-sine-tamed}. We thus present   the proof only with necessary details.

Again, the key to prove the boundedness of moments is to estimate of the growth of the solution under some events \begin{equation}
\tilde{\Omega}_{R,k}:=\{\omega:|X_{l}|\leq R(h),\ l=0,\ldots,k\},\ \label{event-milstein}%
\end{equation}
where $hR^{\varkappa}(h)<1$ such that
\begin{equation} \label{eq:mom-bound-nonrareevent-milstein}
  \mean{\chi_{\tilde{\Omega}_{R,k}}(\omega)\abs{X_k}^{2p} }\leq K(1+  \mean{\abs{X_0}^{2p} }).
\end{equation}

  We first prove the lemma for the integer $p\geq1.$ We have
\begin{eqnarray}\label{eq:mom-bound-cross-term-key-milstein}
&&   \mean{\chi_{\tilde{\Omega}_{R,k+1}}(\omega)\abs{X_{k+1}}^{2p} } \\
& \leq&\mean{\chi_{\tilde{\Omega}_{R,k}}(\omega)\abs{X_k}^{2p} }%
+K\sum_{l=3}^{2p}\mathbb{E}\chi_{\tilde{\Omega}_{R,k}}(\omega)\left\vert
X_{k}\right\vert ^{2p-l}|X_{k+1}-X_{k}|^{l}+\mean{\chi_{\tilde{\Omega}_{R,k}}(\omega)\abs{X_k}^{2p-2}A},\notag
\end{eqnarray}
where $\displaystyle A =  \chi_{\tilde{\Omega}_{R,k}}(\omega) \mean{  2p(X_{k},X_{k+1}-X_{k})+p(2p-1)|X_{k+1}-X_{k}|^{2} |\mathcal{F}_{t_k}}.$
 Compared to the proof of bounded moments for the scheme \eqref{eq:sine-tamed},
 it is essential to  provide
 a proper upper bound for  $A$.
 Similar to the proof of  the upper bound for $A$ in Lemma \ref{eq:mom-sine-tamed}, we have
 \begin{eqnarray*}
A  & = &2p\chi_{\tilde{\Omega}_{R,k}}\mean{(X_k, \sin(a(t_k,X_k)h)
   +\frac{2p-1}{2}\abs{X_{k+1}-X_k}^2 | \mathcal{F}_{t_k}}   \\
&=&2p\chi_{\tilde{\Omega}_{R,k}} \mean{(X_k, a(t_k,X_k)h)
    +\frac{2p-1}{2}(\abs{\sin(a(t_{k},X_{k})h)}^2+ \abs{  \sin(\sum_{r=1}^{m} \sigma_{r}(t_{k},X_{k})\xi_{rk}\sqrt{h})}^2)| \mathcal{F}_{t_k}}   \\
&&+ p(2p-1)\chi_{\tilde{\Omega}_{R,k}} \mean{ \abs{\sin(\sum_{i,r=1}^m\Lambda_i \sigma_r(t,X_k) I_{i,r,t_k})}^2| \mathcal{F}_{t_k}}  \\
&&+ p(2p-1)\chi_{\tilde{\Omega}_{R,k}} \mean{  2( \sin(\sum_{r=1}^{m} \sigma_{r}(t_{k},X_{k})\xi_{rk}\sqrt{h}),  \sin(\sum_{i,r=1}^m\Lambda_i \sigma_r(t,X_k) I_{i,r,t_k}))| \mathcal{F}_{t_k}}  \\
&&+ p(2p-1)\chi_{\tilde{\Omega}_{R,k}} \mean{  2( \sin(a(t_{k},X_{k})h),  \sin(\sum_{i,r=1}^m\Lambda_i \sigma_r(t,X_k) I_{i,r,t_k}))| \mathcal{F}_{t_k}}  \\
&&+ 2p\chi_{\tilde{\Omega}_{R,k}} \mean{(X_k,  a(t_k,X_k)h - \sin(a(t_k, X_k)h) \\
&&+  \sum_{i,r=1}^m\Lambda_i \sigma_r(t,X_k) I_{i,r,t_k}  -\sin(\sum_{i,r=1}^m\Lambda_i \sigma_r(t,X_k) I_{i,r,t_k}))| \mathcal{F}_{t_k}}
\\
&\leq&  \chi_{\tilde{\Omega}_{R,k}}(Kh\abs{X_k}^{2}   +     Kh^2\abs{X_k}^{2\varkappa}  +Kh^2 \abs{X_k}^{2\varkappa+1})\\
&&+ p(2p-1)\chi_{\tilde{\Omega}_{R,k}} \mean{ \abs{\sin(\sum_{i,r=1}^m\Lambda_i \sigma_r(t,X_k) I_{i,r,t_k})}^2| \mathcal{F}_{t_k}} \\
&&  + p(2p-1)\chi_{\tilde{\Omega}_{R,k}} \mean{  2( \sin(\sum_{r=1}^{m} \sigma_{r}(t_{k},X_{k})\xi_{rk}\sqrt{h}),  \sin(\sum_{i,r=1}^m\Lambda_i \sigma_r(t,X_k) I_{i,r,t_k}))| \mathcal{F}_{t_k}}  \\
&&+p(2p-1)\chi_{\tilde{\Omega}_{R,k}} \mean{  2( \sin(a(t_{k},X_{k})h),  \sin(\sum_{i,r=1}^m\Lambda_i \sigma_r(t,X_k) I_{i,r,t_k}))| \mathcal{F}_{t_k}}\\
&&+ 2p\chi_{\tilde{\Omega}_{R,k}} \mean{   \sum_{i,r=1}^m\Lambda_i \sigma_r(t,X_k) I_{i,r,t_k}  -\sin(\sum_{i,r=1}^m\Lambda_i \sigma_r(t,X_k) I_{i,r,t_k}))| \mathcal{F}_{t_k}}.
\end{eqnarray*}

 By the symmetry of the normal distribution  and the asymmetry of the sine function, we have
 \begin{equation}
 \chi_{\tilde{\Omega}_{R,k}} \mean{  2( \sin(\sum_{r=1}^{m} \sigma_{r}(t_{k},X_{k})\xi_{rk}\sqrt{h}),  \sin(\sum_{i,r=1}^m\Lambda_i \sigma_r(t,X_k) I_{i,r,t_k}))| \mathcal{F}_{t_k}}=0.
 \end{equation}
 From  the fact that $\abs{\sin(y)}\leq \abs{y}$ and the inequality  \eqref{eq:basic-inequality-sinetamed}, we  can readily obtain
 \begin{eqnarray}
 &&\chi_{\tilde{\Omega}_{R,k}} \mean{ \abs{\sin(\sum_{i,r=1}^m\Lambda_i \sigma_r(t,X_k) I_{i,r,t_k})}^2| \mathcal{F}_{t_k}} \notag\\
   & & \leq  K\chi_{\tilde{\Omega}_{R,k}}   (\sum_{i,r=1}^m \abs{\Lambda_i \sigma_r(t,X_k)}^2 \mean{ I_{i,r,t_k}^2 |\mathcal{F}_{t_k} }, \\
&& \chi_{\tilde{\Omega}_{R,k}} \mean{  2( \sin(a(t_{k},X_{k})h),  \sin(\sum_{i,r=1}^m\Lambda_i \sigma_r(t,X_k) I_{i,r,t_k}))| \mathcal{F}_{t_k}} \notag\\
&& \quad  \leq K \chi_{\tilde{\Omega}_{R,k}}  \abs{a(t_k,X_k)h}    \sum_{i,r=1}^m \abs{\Lambda_i \sigma_r(t,X_k)} \mean{ \abs{I_{i,r,t_k} }| \mathcal{F}_{t_k}},\\
&& \chi_{\tilde{\Omega}_{R,k}} \mean{   \sum_{i,r=1}^m\Lambda_i \sigma_r(t,X_k) I_{i,r,t_k}  -\sin(\sum_{i,r=1}^m\Lambda_i \sigma_r(t_k,X_k) I_{i,r,t_k}))| \mathcal{F}_{t_k}}\notag\\
&&   \quad\leq   K\chi_{\tilde{\Omega}_{R,k}}  \sum_{i,r=1}^m \abs{\Lambda_i \sigma_r(t,X_k)}^2\mean{  I_{i,r,t_k}^2 | \mathcal{F}_{t_k}}.
 \end{eqnarray}
 Recall the fact that  $(\mean{  \abs{I_{i,r,t_k}} | \mathcal{F}_{t_k}})^2 \leq  \mean{  I_{i,r,t_k}^2 | \mathcal{F}_{t_k}}  \leq K h^2$, see e.g. \cite[pp. 20]{MilTre-B04} and we then have
  \begin{eqnarray*}
A  &\leq&  \chi_{\tilde{\Omega}_{R,k}}(Kh\abs{X_k}^{2}   +     Kh^2\abs{X_k}^{2\varkappa}  +Kh^2 \abs{X_k}^{2\varkappa+1} )\\
&&+  \chi_{\tilde{\Omega}_{R,k}}(Kh^2\abs{a(t_k,X_k)}^2+Kh^2\sum_{i,r=1}^m \abs{\Lambda_i \sigma_r(t,X_k)}^2 ).
\end{eqnarray*}
 By the growth conditions \eqref{eq:polynomial-growth-deduced}  and \eqref{eq:growth-derivatives} , we have
   \begin{eqnarray*}
A  &\leq&  \chi_{\tilde{\Omega}_{R,k}}(Kh\abs{X_k}^{2}   +     Kh^2\abs{X_k}^{2\varkappa+1} +Kh^2\sum_{i,r=1}^m \abs{\Lambda_i \sigma_r(t,X_k)}^2 )\\
&\leq &  \chi_{\tilde{\Omega}_{R,k}}(Kh\abs{X_k}^{2}   +     Kh^2\abs{X_k}^{2\varkappa+1} +Kh^2 \abs{X_k}^{2\varkappa'} ).
\end{eqnarray*}

 Similar to the proof of Lemma \eqref{eq:mom-sine-tamed}, we set $R(h)= h^{-1/G(\varkappa)}$ where $G(\varkappa)= \max(2\varkappa-1, 2\varkappa'-2, \chi_{p>1}3(\varkappa-1))$, we can show that
 \begin{equation}
 \mean{\abs{X_k}^{2p}}\leq K (1+ \mean{ \abs{X_0 }^{4p +2(2p+1)G(\varkappa)}}), \quad 2\leq 2p <\frac{2p_0}{G(\varkappa)}- 1.
 \end{equation}
 This ends the proof.
 \end{proof}

 \subsection{One-step error}

 Now consider the one-step approximation of the SDE \eqref{eq:sde-ito-vec}, which
corresponds to the balanced method \eqref{eq:sine-tamed-milstein}:
\begin{equation} \label{eq:sine-tamed-milstein-one}%
X=x+\sin (a(t,x)h)+\sin(\sum_{r=1}^{m}\sigma_{r}(t,x)\xi_{rk}\sqrt{h}) +\sin(\sum_{i,r=1}^m\Lambda_i\sigma_r(t,x) I_{i,r,t}),
\end{equation}
where
 $ \displaystyle I_{i,r,t} = \int_{t} ^{t+h} \int_{t} ^{s} \,dw_i\,dw_r$
and the one-step approximation corresponding to the Milstein scheme:
\begin{equation}\label{eq:milstein-one-step}%
\tilde{X}=x+a(t,x)h+\sum_{r=1}^{m}\sigma_{r}(t,x)\xi_{r}\sqrt{h}+\sum_{i,r=1}^m\Lambda_i\sigma_r(t,x) I_{i,r,t}.
\end{equation}

\begin{lem}\label{lem:local-sine-tamed-milstein}
Assume that  \eqref{eq:sde-moment-bound-global-monotone} holds and that the coefficients $a(t,x)$ and $\sigma_{r}(t,x)$ have continuous first-order
partial derivatives in $t$ and  up to third-order derivatives in $x$. Also assume that these derivatives and the coefficients
satisfy inequalities of the following form:
\begin{equation} \label{eq:polynomial-growth-derivatives}
\abs{\partial_t a(t,x)},\abs{\partial_t \sigma_r(t,x)}, \abs{L a(t,x)},  \abs{L\sigma_r(t,x)} \abs{\partial_t \Lambda_i\sigma_r(t,x)},\abs{L\Lambda_i\sigma_r(t,x)}  \leq K(1+\abs{x}^{\varkappa''}),
\end{equation}
where $\varkappa''\geq0$.    Then the scheme
\eqref{eq:sine-tamed-milstein}
 satisfies the inequalities
\eqref{eq:one-step-local-mean} and
\eqref{eq:one-step-local-ms}   with
$q_{1}= 2$ and $q_{2}=3/2,$ respectively.
\end{lem}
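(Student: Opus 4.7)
The plan is to parallel the proof of Lemma \ref{lem:local-sine-tamed} by inserting the exact Milstein one-step approximation \eqref{eq:milstein-one-step} as an intermediate object. Setting $\tilde{\rho}(t,x) := X_{t,x}(t+h) - \tilde{X}$ and $\rho(t,x) := \tilde{X} - X$ with $X$ the balanced Milstein one-step \eqref{eq:sine-tamed-milstein-one}, the triangle inequality reduces the bounds \eqref{eq:one-step-local-mean} and \eqref{eq:one-step-local-ms} with $q_1 = 2$, $q_2 = 3/2$ to establishing the same estimates separately for $\tilde{\rho}$ and $\rho$.

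For $\tilde{\rho}$ (the classical Milstein local error), I would apply the Ito-Taylor expansion once more to $a(s,X_{t,x}(s))$ and $\sigma_r(s,X_{t,x}(s))$ around $(t,x)$, producing a remainder made up of triple iterated Ito integrals whose integrands are $\partial_t a$, $La$, $\partial_t\sigma_r$, $L\sigma_r$, $\partial_t\Lambda_i\sigma_r$, and $L\Lambda_i\sigma_r$ evaluated along the flow. Invoking Lemma \ref{lem:sde-nonlinear-holder}, the growth hypothesis \eqref{eq:polynomial-growth-derivatives}, the moment bound \eqref{eq:sde-moment-bound-global-monotone}, and the $L^{2p}$ estimate for Ito integrals from \cite[pp.~26]{GihSko-B72}, the Lebesgue (drift-type) triple integrals contribute $O(h^2)$ to both $\mathbb{E}\tilde{\rho}$ and to the $L^{2p}$-norm of $\tilde{\rho}$, whereas the martingale-type triple integrals are mean-zero and of size $O(h^{3/2})$ in $L^{2p}$. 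This yields $|\mathbb{E}\tilde{\rho}| \leq K(1+|x|^\alpha)^{1/2}h^2$ and $(\mathbb{E}|\tilde{\rho}|^{2p})^{1/(2p)} \leq K(1+|x|^\alpha)^{1/2}h^{3/2}$ for a suitable exponent $\alpha$.

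For $\rho$ I decompose
\[
\rho(t,x) = \bigl[a(t,x)h - \sin(a(t,x)h)\bigr] + \bigl[Y_1 - \sin Y_1\bigr] + \bigl[Y_2 - \sin Y_2\bigr],
\]
with $Y_1 := \sum_r \sigma_r(t,x)\xi_r\sqrt{h}$ and $Y_2 := \sum_{i,r}\Lambda_i\sigma_r(t,x)\,I_{i,r,t}$. The pointwise inequality $|y - \sin y| \leq |y|^3/2$, derived from \eqref{eq:basic-inequality-sinetamed} together with $|\sin(\theta y/2)| \leq |y|/2$, combined with \eqref{eq:polynomial-growth-deduced}, \eqref{eq:growth-derivatives}, and the standard bound $\mathbb{E}|I_{i,r,t}|^{2n} \leq K h^{2n}$, delivers $(\mathbb{E}|\rho|^{2p})^{1/(2p)} \leq K(1+|x|^\alpha)^{1/2}h^{3/2}$; the diffusion contribution is the binding one and gives exactly the required $q_2 = 3/2$. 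For the mean, $Y_1$ is a centered Gaussian and hence symmetric about $0$, so $\mathbb{E}[Y_1 - \sin Y_1] = 0$ by oddness of $y \mapsto y - \sin y$; the deterministic drift piece is bounded by $Kh^3(1+|x|^{3\varkappa})$; and $|\mathbb{E}[Y_2 - \sin Y_2]| \leq \tfrac{1}{2}\mathbb{E}|Y_2|^3 \leq Kh^3(1+|x|^{3\varkappa'})$. Altogether this gives $|\mathbb{E}\rho| \leq K(1+|x|^\alpha)^{1/2}h^2$, and combined with Step~1 via triangle inequality, the required $q_1 = 2$ follows.

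The main obstacle will be careful bookkeeping of the polynomial exponents in $|x|$ inside the Ito-Taylor remainder: the appearance of $La$, $L\sigma_r$, and $L\Lambda_i\sigma_r$ compounds the growth rates, and one must check that \eqref{eq:sde-moment-bound-global-monotone} furnishes enough finite moments of $X_{t,x}(s)$ for the $L^{2p}$ Ito-integral inequality to apply. A more delicate point is the symmetry-based cancellation used for $Y_1$: the argument requires that the three sine-wrapped blocks in \eqref{eq:sine-tamed-milstein} remain \emph{separated}, so that only the genuinely symmetric diffusion block is exploited via oddness, while the non-symmetric double-Ito block is handled by the cruder cubic bound $|y|^3/2$.
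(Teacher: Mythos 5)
Your proposal follows essentially the same route as the paper: insert the classical Milstein one-step \eqref{eq:milstein-one-step} as the intermediate object, bound $\tilde{\rho}$ via Ito-formula expansions of $a$, $\sigma_r$, $\Lambda_i\sigma_r$ together with \eqref{eq:polynomial-growth-derivatives} and the Ito-integral moment inequality, and bound $\rho$ blockwise using \eqref{eq:basic-inequality-sinetamed}, $|\sin y|\leq|y|$, Gaussian symmetry for the $Y_1$ block, and $\mathbb{E}|I_{i,r,t}|^{2n}\leq Kh^{2n}$, exactly as in the paper's two-step argument. The only cosmetic differences (your cubic bound $|y-\sin y|\leq|y|^{3}/2$ for the drift and $Y_2$ mean terms where the paper uses a quadratic bound, and your slightly loose labeling of which Ito--Taylor remainder terms are ``Lebesgue-type'') do not change the estimates or the resulting orders $q_1=2$, $q_2=3/2$.
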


The proof of this lemma is given below.
According to Theorem~\ref{thm:ms-convergence-local-global}, the following proposition can be readily deduced
from Lemmas~\ref{lem:mom-sine-tamed-milstein} and~\ref{lem:local-sine-tamed-milstein}.
\begin{thm}\label{prp:sine-tamed-order-milstein}
Under the assumptions of Lemmas \ref{lem:mom-sine-tamed-milstein} and~\ref{lem:local-sine-tamed-milstein}.
 the balanced Milstein  scheme \eqref{eq:sine-tamed-milstein} has a mean-square convergence order
one, i.e.,  the inequality \eqref{eq:global-err-ms} holds with $q=1.$
\end{thm}

\begin{proof}
Step 1.
We start with the analysis of the one-step error of the Milstein scheme:
\[
\tilde{\rho}(t,x):=X_{t,x}(t+h)-\tilde{X}.
\]

By the Ito  formula,   we obtain
 \begin{eqnarray} \label{eq:local-err-mean-milstein-exact}
&&\abs{\mean{\tilde{\rho}(t,x)}}  \notag\\
  &=&\abs{ \mean{\int_{t}^{t+h}(a(s,X_{t,x}(s))-a(t,x))ds+\sum_{i,r=1}^m \int_t^{t+h}\int_t^s [\Lambda_i\sigma_r(\theta,X_{t,x}(\theta))-\Lambda_i\sigma_r(t,x)]  \,dw_i\,dw_r}} \notag\\
 &\leq& \abs{ \mean{\int_{t}^{t+h} \int_t^s \partial_ta(\theta,X_{t,x}(\theta))    + L a(\theta,X_{t,x}(\theta))\,d\theta\,ds  }}  \notag \\
 &&+\abs{\mean{\sum_{r=1}^m \int_t^{t+h}\int_t^s  \int_t^{\theta} \partial_t(\Lambda_r\sigma_r(\theta_1,X_{t,x}(\theta_1)))+ L\Lambda_r\sigma_r(\theta_1,X_{t,x}  (\theta_1))\,d\theta_1\,dw_r(\theta)\,dw_r(s)} }  \notag\\
  &\leq& Kh^{2}(1+|x|^{ \varkappa''}),
\end{eqnarray}
where we have used  H{\"o}lder's inequality and the growth condition \eqref{eq:polynomial-growth-derivatives}.

For the mean-square one-step error, we have
\begin{eqnarray}\label{eq:local-err-ms-milstein-exact}
\mean{\abs{\tilde{\rho}}^{2p}(t,x)}  & \leq & K\mean{\abs{\int_{t}
^{t+h}(a(s,X_{t,x}(s))-a(t,x))ds }^{2p}  }\\
&&+K\sum_{r=1}^{q}\mean{\abs{ \int_{t}^{t+h} (  \sigma
_{r}(s,X_{t,x}(s))-\sigma_{r}(t,x)  dw_{r}(s) -\sum_{i,r=1}^m\Lambda_i\sigma_r(t,x) I_{i,r,t}} ^{2p} }.\notag
\end{eqnarray}
By the Ito  formula, using the inequality for powers of Ito integrals from \cite[pp. 26]{GihSko-B72}, we obtain
\begin{eqnarray*}
&&\mean{\abs{ \int_{t}^{t+h}(\sigma_r(s,X_{t,x}(s))-\sigma_r(t,x))\,dw_r(s)-\sum_{i,r=1}^m\Lambda_i\sigma_r(t,x) \int_t^{t+h}\int_t^s \,dw_i\,dw_r}^{2p} } \notag  \\
&\leq&  Kh^{p-1}\int_{t}^{t+h}\mean{\abs{\sum_{i,r=1}^m \int_t^s[  \Lambda_i \sigma_r(\theta, X_{t,x}(\theta)) -\Lambda_i\sigma_r(t,x) ] \,dw_i(\theta)} }^{2p}\,ds   \notag    \\
&& + K h^{p-1} \int_{t}^{t+h}\mean{ \abs{\int_t^s  \partial_t\sigma_r(\theta,X_{t,x}(\theta))+L  \sigma_r(\theta,X_{t,x}(\theta))\,d\theta }^{2p}}\,ds   \\
&\leq&  Kh^{p-1}\int_{t}^{t+h}  (t-s)^{p-1} \sum_{i,r=1}^m \int_t^s  \mean{\abs{\Lambda_i \sigma_r(\theta, X_{t,x}(\theta))-\Lambda_i\sigma_r(t,x)}^{2p} } \,d \theta\,ds   \notag    \\
&& + K h^{p-1} \int_{t}^{t+h}\mean{ \abs{\int_t^s  \partial_t\sigma_r(\theta,X_{t,x}(\theta))+L  \sigma_r(\theta,X_{t,x}(\theta))\,d\theta }^{2p}}\,ds,
\end{eqnarray*}
which can be further estimated as, using the Ito  formula for $\Lambda_i\sigma_r$ and H{\"o}lder's inequality,
\begin{eqnarray}\label{eq:local-ms-error-diffusion-mil}
&&\mean{\abs{ \int_{t}^{t+h}(\sigma_r(s,X_{t,x}(s))-\sigma_r(t,x))\,dw_r(s)-\sum_{i,r=1}^m\Lambda_i\sigma_r(t,x) \int_t^{t+h}\int_t^s \,dw_i\,dw_r}^{2p} } \notag  \\
&\leq&  Kh^{p-1}\int_{t}^{t+h}  (t-s)^{2p} ( 1+\abs{x}^{2p\varkappa''-2p})  \,ds     +
K h^{3p}  (1+\abs{x}^{2p\varkappa''-2p}  )\notag   \\
& \leq  & K h^{3p}  (1+\abs{x}^{2p\varkappa''-2p}  ),
\end{eqnarray}
where we have used  the growth condition \eqref{eq:polynomial-growth-derivatives}.
Similarly, we have
\begin{eqnarray}\label{eq:local-ms-error-drift-milstein}
\mean{\abs{\int_{t}^{t+h}(a(s,X_{t,x}(s))-a(t,x))ds }^{2p} }&\leq& Kh^{2p-1}\int_{t}^{t+h}\mean{\abs{a(s,X_{t,x}(s))-a(t,x)}^{2p} }\,ds\notag\\
&&\leq Kh^{3p}(1+|x|^{2p\varkappa''-2p}).
\end{eqnarray}
By  \eqref{eq:local-ms-error-diffusion-mil} and \eqref{eq:local-ms-error-drift-milstein},
 we obtain
 \begin{equation}  \label{eq:local-ms-error-milstein-exact}
\mean{\abs{\tilde{\rho}}^{2p}(t,x)}   \leq Kh^{3p}(1+|x|^{2p\varkappa''-2p}).
 \end{equation}

Step 2. Now we compare the one-step approximations \eqref{eq:sine-tamed-milstein-one} of the balanced
scheme \eqref{eq:sine-tamed-milstein} and   \eqref{eq:milstein-one-step} of the Milstein scheme.
 Define
\begin{eqnarray}
\rho(t,x) &=& a(t,x)h -\sin(a(t,x)h)+\sum_{r=1}^{m}\sigma_{r}(t,x)\xi_{r}\sqrt{h} -\sin(\sum_{r=1}^{m}\sigma_{r}(t,x)\xi_{r}\sqrt{h} ) \notag\\
&& + \sum_{i,r=1}^m  \Lambda_i \sigma_r(t,x) I_{i,r,t}-\sin(\sum_{i,r=1}^m  \Lambda_i \sigma_r(t,x) I_{i,r,t}).
\end{eqnarray}

By the symmetry of the normal distribution and the asymmetry of  the sine function, we have
\begin{eqnarray} \label{eq:local-err-mean-milstein-sinetamed}
 \abs{\mathbb{E}\rho(t,x)}&\leq &  \abs{\mean{  a(t,x)h-\sin(a(t,x)h)  }}   \notag\\
 & & +  \abs{ \mean{  \sum_{i,r=1}^m  \Lambda_i \sigma_r(t,x) I_{i,r,t}-\sin(\sum_{i,r=1}^m  \Lambda_i \sigma_r(t,x) I_{i,r,t})}}
 \notag \\
   &\leq&  \abs{a(t,x)h}^2 +  \mean{\abs{\sum_{i,r=1}^m  \Lambda_i \sigma_r(t,x) I_{i,r,t}}^2}\notag  \\
      &\leq&  \abs{a(t,x)h}^2 + K \sum_{i,r=1}^m  \abs{\Lambda_i \sigma_r(t,x)}^2 \mean{ I_{i,r,t}^2}\notag  \\
   &\leq & Kh^{2}(1+|x|^{\max(2\varkappa,2\varkappa')}),
\end{eqnarray}
where  we have used the  inequality \eqref{eq:basic-inequality-sinetamed}, the polynomial growth conditions \eqref{eq:polynomial-growth-deduced} and  \eqref{eq:growth-derivatives},  and $\mean{I_{i,r,t}^2}\leq K h^2$.
From here  and \eqref{eq:local-err-mean-milstein-exact}, we have the one-step  approximation of \eqref{eq:sine-tamed-milstein}, \eqref{eq:sine-tamed-milstein-one}, satisfies \eqref{eq:one-step-local-mean} with $q_{1}=2$.

From the inequality \eqref{eq:basic-inequality-sinetamed} and $\abs{\sin(y)}\leq\abs{y}$,  we  can readily obtain
\begin{eqnarray} \label{eq:local-err-ms-milstein-sinetamed}
\mean{\abs{\rho}^{2p}(t,x)}&\leq&
K  \mean{\abs{a(t,x)h -\sin(a(t,x)h)}^{2p} }  \notag\\
&&+ K\mean{\abs{\sum_{r=1}^{m}\sigma_{r}(t,x)\xi_{r}\sqrt{h} -\sin(\sum_{r=1}^{m}\sigma_{r}(t,x)\xi_{r}\sqrt{h}}^{2p}} \notag \\
&&+ K\mean{ \abs{ \sum_{i,r=1}^m  \Lambda_i \sigma_r(t,x) I_{i,r,t_k}-\sin(\sum_{i,r=1}^m  \Lambda_i \sigma_r(t,x) I_{i,r,t_k}) }^{2p}}  \notag \\
&\leq&   K \abs{a(t,x)h }^{3p}  + K \mean{\abs{ \sum_{r=1}^m\sigma_r(t,x)\xi_r\sqrt{h}}^{6p}}+  K\mean{\abs{  \sum_{i,r=1}^m  \Lambda_i \sigma_r(t,x) I_{i,r,t_k} }^{3p}}  \notag \\
&\leq&   K h^{3p}(\abs{a(t,x) }^{3p}  +  \sum_{r=1}^m\abs{\sigma_r(t,x)}^{6p} +   \sum_{i,r=1}^m \abs{   \Lambda_i \sigma_r(t,x) }^{3p}).
\end{eqnarray}
Thus by  the polynomial growth conditions \eqref{eq:polynomial-growth-deduced} and \eqref{eq:growth-derivatives}, we obtain
\begin{eqnarray}
\mean{\abs{\rho}^{2p}(t,x)}&\leq &  Kh^{3p}(1+\abs{x}^{\max(3p(\varkappa+1),3p\varkappa')} ), \notag
\end{eqnarray}which together with \eqref{eq:local-ms-error-milstein-exact} implies that  the one-step  approximation of \eqref{eq:sine-tamed-milstein}, \eqref{eq:sine-tamed-milstein-one}, satisfies \eqref{eq:one-step-local-ms} with $q_{2}=3/2$.
 \end{proof}

 \begin{rem}
  Similar to the proof above,  we can prove that
 the following balanced scheme has  the same convergence  rate as
   the scheme \eqref{eq:sine-tamed-milstein} does:
  \begin{eqnarray} \label{eq:sine-tamed-milstein-v1}
    X_{k+1}  &=& X_{k}+   \mathcal{T}(a(t_k,X_{k}) h )+\sum_{r=1}^{m}\mathcal{T}(\sigma_{r}(t_{k},X_{k})\sqrt{h} )    \xi_{rk} \\
    && + \sum_{i,r=1}^m\mathcal{T}(\Lambda_i \sigma_r(t,X_k)h)\frac{I_{i,r,t_k}}{h}. \notag
 \end{eqnarray}
  Similar to \eqref{eq:sine-tamed-v1}, the numerical 
		solution is not bounded since the $\xi_k$'s can take values in the real line. Numerical results (not resented) for both schemes (with different tame functions)  show similar error behaviors when both schemes are applied to Example \ref{exm:com}. 
 \end{rem}

\section{Numerical results}
In this section, we present some numerical results for our proposed schemes and test the mean-square convergence orders.
To compute the mean-square error, we run $M$ independent trajectories
$X^{(i)}(t),$ $X_{k}^{(i)}$:
\begin{equation}
\left(  E\left[  X(T)-X_{N}\right]  ^{2}\right)  ^{1/2}\doteq\left(  \frac
{1}{M}\sum_{i=1}^{M}[X^{(i)}(T)-X_{N}^{(i)}]^{2}\right)  ^{1/2},
\label{experr}
\end{equation}
where   $T=5$ and $M=10^{4}.$ The reference solution (we don't have $X(T)$ and thus we need a good approximation of $X(T)$) was computed by the
mid-point method \cite{MilTre-B04,TreZhang13}  with small time step $h=10^{-5}$:
\begin{eqnarray}\label{eq:midpoint}
X_{k+1}  &  = & X_{k}+a(t_{k+1/2}, (X_{k}+ X_{k+1})/2)h+\sum_{r=1}^{m}\sigma_{r}(t_{k+1/2},  (X_{k}+ X_{k+1})/2)\left(  \zeta_{rh}\right)  _{k}\sqrt{h}\  \notag \\
& & -1/2\sum_{r=1}^{m}\sum_{j=1}^{d}\frac{\partial\sigma_{r}}{\partial
x^{j}}(t_{k+1/2}, (X_{k}+ X_{k+1})/2)\sigma_{j,r}(t_{k+\lambda}, (X_{k}+ X_{k+1})/2)h, 
\end{eqnarray}
where    $t_{k+1/2}=t_{k}+  h/2$ and $\left(
\zeta_{rh}\right)  _{k}$ are i.i.d. random variables so that
\begin{equation}
\zeta_{h}=\left\{
\begin{array}
[c]{c}%
\xi,\;|\xi|\leq A_{h},\\
A_{h},\;\xi>A_{h},\\
-A_{h},\;\xi<-A_{h},
\end{array}
\right.  \label{Fin61}%
\end{equation}
with $\xi$ $\sim$ $\mathcal{N}(0,1)$ and $A_{h}=\sqrt{2l|\ln h|}$ with
$l\geq1.$ Here  we took $l = 2$. Newton's method was used to solve the nonlinear algebraic equations
at each step of the implicit scheme.
It was verified that using  other schemes for simulating a reference solution with enough resolution does not
affect the convergence orders.
The experiments were performed
using Matlab R2014b (64 bit) and we used
 the Matlab command  \textit{rng(100,`twister')} to generate random numbers.

\begin{exm}
\label{exm:com} Consider  the following Stratonovich SDE of the form:
\begin{equation}
dX=(1-X^{5})\,dt+\sigma X^2\circ\,dw_{1},\quad X(0)=0. \label{Example1}%
\end{equation}

\end{exm}

 In  Ito's sense, the drift of the equation becomes $a(t,x)=1-x^{5}%
+\sigma^2 x^3$.
In the following tables, we present some numerical results of
the mid-point scheme and our explicit schemes, where the statistical errors with  the $95\%$ confidence level can be ignored.

The mid-point is  of order one in the mean-square sense as the coefficients of the noise satisfies the commutative conditions \eqref{eq:commutative-condition}, see e.g. \cite{TreZhang13}. The  Milstein scheme \eqref{eq:sine-tamed-milstein} is
of   order one  and the tamed Euler scheme \eqref{eq:sine-tamed}  is  of order half as  predicted from our proved convergence orders.  
	In this example, we have commutative noises and use  then  the scheme \eqref{eq:sine-tamed-milstein-com} instead of \eqref{eq:sine-tamed-milstein} in computation.	
	We first take 
	$\sigma=0.5$ and test our tamed schemes for Equation \eqref{Example1} up to $T=50$. In Table \ref{tab:mean-square-sigma-small}, the two tamed Euler schemes \eqref{eq:sine-tamed} with 
different tame functions yield similar mean-square errors and  convergence orders.
The two tamed Milstein schemes \eqref{eq:sine-tamed-milstein} with $\mathcal{T}(\cdot)=\tanh(\cdot)$ and $\mathcal{T}(\cdot)=\sin(\cdot)$ also have similar errors and convergence orders.  The errors from the tamed Euler (Milstein) schemes with the tame function $\tanh(\cdot)$  are of the same magnitudes
as the sine tamed Euler (Milstein) schemes   if the same time step size $h$ is used.

 \begin{table}[h]
 	\caption{Mean-square errors of mid-point scheme and our explicit schemes for Example \ref{exm:com} when  $\sigma=0.5$ at $T=50$.}  \label{tab:mean-square-sigma-small} \smallskip
 	\centering
 	\scalebox{0.80}{ 	
 		\begin{tabular}  {|c ||c |c  ||c  |c||c  |c||c|c||c|c|}   \hline
 			$h$  &    \eqref{eq:midpoint}  & rate    &   \eqref{eq:sine-tamed-milstein}-$\tanh$  &  rate  &    \eqref{eq:sine-tamed}-$\tanh$ &  rate   &   \eqref{eq:sine-tamed-milstein}-sine  &  rate  &    \eqref{eq:sine-tamed}-sine &  rate    \\ \hline \hline
 			2e-2 & 5.7482e-3 & --     &  1.2514e-2  &  --     & 2.9237e-2 & --   & 1.3010e-2  & --   & 2.9994e-2 &  --  \\ \hline
 			1e-2 & 2.8678e-3 & 1.00   &  5.6681e-3  & 1.14    & 1.8266e-2 & 0.68 & 5.7848e-3  & 1.17 & 1.8525e-2 & 0.70 \\ \hline
 			5e-3 & 1.4510e-3 & 0.98   &  2.7578e-3  & 1.04    & 1.2299e-2 & 0.57 & 2.8129e-3  & 1.04 & 1.2382e-2 & 0.58 \\ \hline
 			2e-3 & 5.7308e-4 & 1.01   &  1.0720e-3  & 1.03    & 7.4287e-3 & 0.55 & 1.0908e-3  & 1.03 & 7.4526e-3 & 0.55 \\ \hline
 			1e-3 & 2.8997e-4 & 0.98   &  5.2751e-4  & 1.02    & 5.2495e-3 & 0.50 & 5.3647e-4  & 1.02 & 5.2568e-3 & 0.50 \\ \hline
 			5e-4 & 1.4223e-4 & 1.03   &  2.6535e-4  & 0.99    & 3.7300e-3 & 0.49 & 2.6994e-4  & 0.99 & 3.7330e-3 & 0.49 \\ \hline
 		\end{tabular}  }
 	\end{table}
 
  We  now test  these   schemes when $\sigma=1$ and $T=50$ and present the results for tame schemes with the hyperbolic tangent function in Table \ref{tab:mean-square-sigma-large}.  The tamed schemes with $\tanh(\cdot)$  work well even with large time step sizes\footnote{In practice, it is useful to have an estimate  of the size of small $h$ to guarantee accuracy. However, we did not succeed in establishing such an estimate for balanced/tamed schemes in theory  or find    any efforts in this direction in literature.}. The schemes with  
    with the sine tame function can not lead to reasonable accuracy with  time step sizes larger than $0.002$.  However, with smaller time step sizes, we can  obtain very satisfactory accuracy from our schemes and the accuracy is very similar to that from  the mid-point scheme \eqref{eq:midpoint}. For example, when $h=0.002$, the mean-square error of the sine tamed Milstein scheme  is $6.2773\times 10^{-3}$ and the mean-square error of the 
    sine tamed Euler  scheme  is $3.2381\times 10^{-2}$. Moreover, the convergence rates of the sine tamed schemes are the same as predicted (numerical results not represented). 

  \begin{table}[h]
  	\caption{Mean-square errors of mid-point scheme and our explicit schemes for Example \ref{exm:com} when  $\sigma=1$ at $T=50$.
  	} \label{tab:mean-square-sigma-large} \smallskip
  	\centering
 	
  		\begin{tabular}  {|c||c |c  ||c  |c||c  |c||c|c||c|c|}   \hline
  			$h$  &    \eqref{eq:midpoint}  & rate    &   \eqref{eq:sine-tamed-milstein}-$\tanh$  &  rate  &    \eqref{eq:sine-tamed}-$\tanh$ &  rate      \\ \hline \hline 
1e-1& 8.5591e-2  & --    &  3.4851e-1 &  --    &  3.9115e-1 & --               \\ \hline 
5e-2& 5.2338e-2  &0.71   &  1.8817e-1 &  0.89  &  2.4587e-1 & 0.67              \\ \hline
2e-2& 2.2158e-2  &0.94   &  7.3670e-2 &  1.02  &  1.2392e-1 & 0.75             \\ \hline
1e-2& 1.1293e-2  &0.97   &  3.3404e-2 &  1.14  &  7.6968e-2 & 0.69             \\ \hline
5e-3& 6.0575e-3  &0.90   &  1.5098e-2 &  1.15  &  5.2444e-2 & 0.55             \\ \hline
2e-3& 2.4991e-3  &0.97   &  6.1777e-3 &  0.98  &  3.1631e-2 & 0.55             \\ \hline
1e-3& 1.1832e-3  &1.08   &  3.1113e-3 &  0.99  &  2.2456e-2 & 0.49             \\ \hline
5e-4& 5.8179e-4  &1.02   &  1.3976e-3 &  1.15  &  1.5892e-2 & 0.50             \\ \hline
 \end{tabular}   
  	\end{table}

 To understand why   schemes with the  sine tame function are not working well, let us  look at some paths of solutions to \eqref{Example1} with different initial values.   In Figure 1, we plot  solution paths up to $T=5$ using these five  schemes with 
$h=10^{-3}$ when $X(0)=0$ and $X(0)=100$. In the computation, we skip $5\times 10^5$ random numbers. 
When  $X(0)=0$, all the schemes  capture    the same magnitudes. 
While for  $X(0)=100$, we observe that the  tamed   schemes using the sine function can not obtain  correct magnitudes of the solution for $t\leq 4.5$. Moreover, the sine tamed Milstein scheme eventually reach zero (the correct magnitude) when $t$ is around 5. This test suggests that the sine tamed schemes may need smaller time step sizes to show convergence and obtain moderate accuracy.  

\begin{figure}[!h]
	\caption{Solution paths from the midpoint scheme \eqref{eq:midpoint} and tamed schemes \eqref{eq:sine-tamed} and \eqref{eq:sine-tamed-milstein}. The time step size $h$ is $10^{-3}$. Left: initial value $X(0)=0$. Right: initial value $X(0)=100$.}
	\centering
	\includegraphics[scale=0.46]{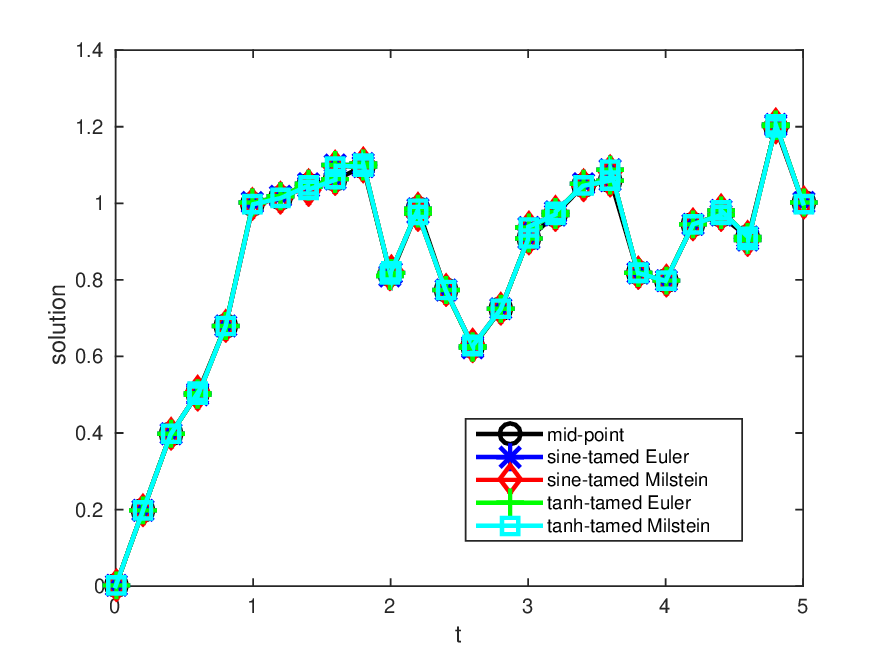}\hskip  -1 pt
	\includegraphics[scale=0.46]{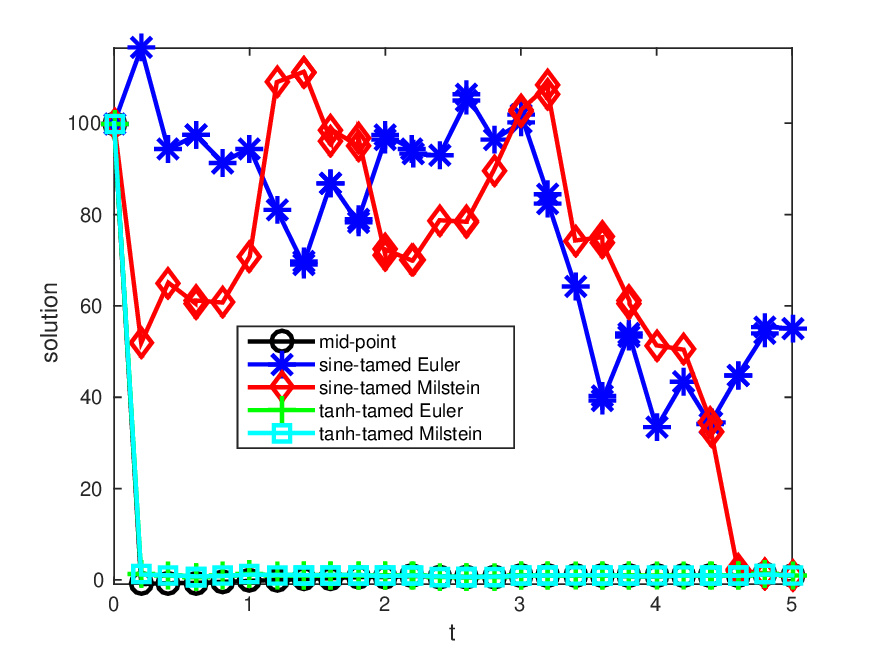}
\end{figure}

 In summary, we conclude from this example that tamed schemes can preserve the convergence orders (half-order and first-order). Though the tamed schemes are convergent,   requirements on time step sizes   are different. 
Numerical results show that schemes with 	
	  the hyperbolic tangent function allows larger time steps   than those with the sine tame function.  

\section*{Acknowledgement}
The work of the first author  was partially supported by a start-up fund from WPI. 
The work of the second author was supported by the NSFC grant 11571224.

  
  \def\cprime{$'$} \def\polhk#1{\setbox0=\hbox{#1}{\ooalign{\hidewidth
  \lower1.5ex\hbox{`}\hidewidth\crcr\unhbox0}}} \def\cprime{$'$}
 

\begin{thebibliography}{10}
\expandafter\ifx\csname url\endcsname\relax
  \def\url#1{\texttt{#1}}\fi
\expandafter\ifx\csname urlprefix\endcsname\relax\def\urlprefix{URL }\fi

\bibitem{BiZhang16}  
J.~Bi, Z.~Zhang,   A numerical scheme for nonlinear {SDE}s with both one-sided Lipschitz and H\"older continuous coefficients, In preparation.  

\bibitem{GihSko-B72}
{\u{I}}.~{\=I}. G{\={\i}}hman, A.~V. Skorohod, Stochastic differential
  equations, Springer-Verlag, New York, 1972.

\bibitem{HigMS02}
D.~J. Higham, X.~Mao, A.~M. Stuart, Strong convergence of {E}uler-type methods
  for nonlinear stochastic differential equations, SIAM J. Numer. Anal. 40~(3)
  (2002) 1041--1063.
 
\bibitem{Hu96}
Y.~Hu, Semi-implicit {E}uler-{M}aruyama scheme for stiff stochastic equations,
  in: Stochastic analysis and related topics, Birkh\"auser Boston, Boston, MA,
  1996, pp. 183--202.

\bibitem{HutJen14}
M.~Hutzenthaler, A.~Jentzen, On a perturbation theory and on strong convergence
  rates for stochastic ordinary and partial differential equations with
  non-globally monotone coefficients, ArXiv, 2014.

\bibitem{HutJen15}
M.~Hutzenthaler, A.~Jentzen, Numerical approximation of stochastic differential
  equations with non-globally {L}ipschitz continuous coefficients, Mem. Amer.
  Math. Soc. 236~(1112) (2015) 1.

\bibitem{HutJK11}
M.~Hutzenthaler, A.~Jentzen, P.~E. Kloeden, Strong and weak divergence in
  finite time of {E}uler's method for stochastic differential equations with
  non-globally {L}ipschitz continuous coefficients, Proc. R. Soc. A~(2130)
  (2011) 1563--1576.

\bibitem{HutJK12}
M.~Hutzenthaler, A.~Jentzen, P.~E. Kloeden, Strong convergence of an explicit
  numerical method for {SDE}s with nonglobally {L}ipschitz continuous
  coefficients, Ann. Appl. Probab. 22~(4) (2012) 1611--1641.

\bibitem{HutJW13}
M.~Hutzenthaler, A.~Jentzen, X.~Wang, Exponential integrability properties of
  numerical approximation processes for nonlinear stochastic differential
  equations, ArXiv, 2013

\bibitem{KloPla-B92}
P.~E. Kloeden, E.~Platen, Numerical solution of stochastic differential
  equations, Springer-Verlag, Berlin, 1992.

\bibitem{LiuMao13}
W.~Liu, X.~Mao, Strong convergence of the stopped {E}uler-{M}aruyama method for
  nonlinear stochastic differential equations, Appl. Math. Comput. 223~(0)
  (2013) 389 -- 400.

\bibitem{Mao15}
X.~Mao, The truncated {E}uler--{M}aruyama method for stochastic differential
  equations, J. Comput. Appl. Math. 290 (2015) 370--384.

\bibitem{MaoSzp13}
X.~Mao, L.~Szpruch, Strong convergence and stability of implicit numerical
  methods for stochastic differential equations with non-globally {L}ipschitz
  continuous coefficients, J. Comput. Appl. Math. 238 (2013) 14--28.

\bibitem{MaoSzp13a}
X.~Mao, L.~Szpruch, Strong convergence rates for backward {E}uler-{M}aruyama
  method for non-linear dissipative-type stochastic differential equations with
  super-linear diffusion coefficients, Stochastics 85~(1) (2013) 144--171.

\bibitem{Mil-B95}
G.~N. Milstein, Numerical integration of stochastic differential equations,
  Kluwer Academic Publishers Group, Dordrecht, 1995.

\bibitem{MilPS98}
G.~N. Milstein, E.~Platen, H.~Schurz, Balanced implicit methods for stiff
  stochastic systems, SIAM J. Numer. Anal. 35~(3) (1998) 1010--1019.

\bibitem{MilTre-B04}
G.~N. Milstein, M.~V. Tretyakov, Stochastic numerics for mathematical physics,
  Springer-Verlag, Berlin, 2004.

\bibitem{MilTre05}
G.~N. Milstein, M.~V. Tretyakov, Numerical integration of stochastic
  differential equations with nonglobally {L}ipschitz coefficients, SIAM J.
  Numer. Anal. 43~(3) (2005) 1139--1154.

\bibitem{Sab13}
S.~Sabanis, {E}uler approximations with varying coefficients: the case of
  superlinearly growing diffusion coefficients, ArXiv, 2013.

\bibitem{Sab13a}
S.~Sabanis, A note on tamed {E}uler approximations, Electron. Commun. Probab.
  18 (2013) no. 47, 1--10.
 
\bibitem{Szp13}
L.~Szpruch, V-stable tamed {E}uler schemes, ArXiv, 2013.

\bibitem{Tal02}
D.~Talay, Stochastic {H}amiltonian systems: exponential convergence to the
  invariant measure, and discretization by the implicit {E}uler scheme, Markov
  Process. Related Fields 8~(2) (2002) 163--198.

\bibitem{TreZhang13}
M.~V. Tretyakov, Z.~Zhang, A fundamental mean-square convergence theorem for
  {SDE}s with locally {L}ipschitz coefficients and its applications, SIAM J.
  Numer. Anal. 51~(6) (2013) 3135--3162.
 
\bibitem{WangGan13b}
X.~Wang, S.~Gan, The tamed {M}ilstein method for commutative stochastic
  differential equations with non-globally {L}ipschitz continuous coefficients,
  J. Difference Equ. Appl. 19~(3) (2013) 466--490.
 
\bibitem{ZongWH14}
X.~Zong, F.~Wu, C.~Huang, Convergence and stability of the semi-tamed {E}uler
  scheme for stochastic differential equations with non-{L}ipschitz continuous
  coefficients, Appl. Math. Comput. 228~(0) (2014) 240--250.
 
\end{thebibliography}
\end{document}